\newtheorem{theorem}{Theorem}[section]
\newtheorem{proposition}[theorem]{Proposition}
\newtheorem{definition}[theorem]{Definition}
\newtheorem{lemma}[theorem]{Lemma}
\newtheorem{corollary}[theorem]{Corollary}
\newcommand{\wh}{\widehat} 
\newcommand{\bZ}{\mathbb Z}
\numberwithin{equation}{section}
\title[]{Characterizing Right-Veering Homeomorphisms\\
       of the Punctured Torus via\\
       the Burau Representation\\}
\author{Emille K. Davie}
\begin{document}

\begin{abstract}

We classify right-veering homeomorphisms of the once-punctured torus using the Burau representation of the 3-strand braid group.  We show that reducible and periodic mapping classes in $B_3$ can be identified as right-veering by consideration of the reduced version of the Burau representation.  Given any element $\beta$ in $B_3$, we give a method to quickly determine its action on the generators of the fundamental group of the $3$-times punctured disk.  This action which determines whether $\beta$ is right-veering, left-veering, or neither. 

\end{abstract}

\maketitle  

\pagenumbering{arabic}  

\section{Introduction and Background}

The goal of this paper is to use the Burau matrix of a $3$-braid to determine whether the corresponding homeomorphism $h$ is right-veering, left-veering, or neither.  In Section 1, we give brief introductions to all relevant ideas used throughout this paper.   In Section 2, we explain the veering information of a reducible or periodic element of $B_3$ by consideration of the trace of a power of its reduced Burau matrix.  Finally, in Section 3, we prove the following main theorems that allow us to determine a $3$-braid's action on the generators of the $3$-punctured disk by consideration of their geometric intersection with arcs $\alpha_1,\alpha_2,\alpha_3$ that start and end at the punctures. 

\begin{theorem}\label{main} Given a 3-braid $\beta$ and its matrix $\mathrm M(\beta)$ in the image of the unreduced Burau representation, we can determine the geometric intersection number $i(\beta(y_i),\alpha_j)$ of $\beta(y_i)$ and  $\alpha_j$ from the entries of $M(\beta)$ for each $1\leq i,j\leq 3$ .\end{theorem}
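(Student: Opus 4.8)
The plan is to pass through the infinite cyclic cover of the punctured disk, where the entries of the unreduced Burau matrix acquire a transparent meaning as a \emph{graded} intersection pairing, and then to show that this grading prevents cancellation so that the geometric intersection numbers can be read off coefficient by coefficient.

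First I would recall the topological description of the unreduced Burau representation. The braid $\beta$ acts on $\pi_1(D_3,*)=\langle y_1,y_2,y_3\rangle$ by the Artin automorphism, and, with the standard indexing convention, the $(i,j)$ entry of $M(\beta)$ is the image under the abelianization $\epsilon\colon y_k\mapsto t$ of the Fox derivative $\partial\beta(y_i)/\partial y_j$. Concretely this realizes $M(\beta)$ as the matrix of $\beta_*$ acting on $H_1$ of the infinite cyclic cover $p\colon\widetilde{D_3}\to D_3$ determined by $\ker\epsilon$, with $t$ generating the deck group. Fixing a lift $\widetilde{\alpha_j}$ of each arc and a lift of each $\beta(y_i)$, the standard Fox-calculus bookkeeping identifies the coefficient of $t^k$ in $M(\beta)_{ij}$ with the \emph{algebraic} intersection number of the chosen lift of $\beta(y_i)$ with the translate $t^k\,\widetilde{\alpha_j}$. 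Writing $M(\beta)_{ij}=\sum_k a_{ij}^{(k)}t^k$, setting $t=1$ recovers the total algebraic intersection downstairs, while the geometric intersection number is always at least $\sum_k|a_{ij}^{(k)}|$.

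The crux is the reverse inequality, $i(\beta(y_i),\alpha_j)\le\sum_k|a_{ij}^{(k)}|$. Put $\beta(y_i)$ and $\alpha_j$ in minimal position (no bigons) in $D_3$; their lifts are then in minimal position in $\widetilde{D_3}$, since a bigon upstairs would project to one downstairs. Each crossing downstairs lifts to a crossing of the chosen lift of $\beta(y_i)$ with exactly one translate $t^k\,\widetilde{\alpha_j}$, assigning it a well-defined level $k$, so the geometric count splits as $\sum_k g_{ij}^{(k)}$, where $g_{ij}^{(k)}$ counts crossings with the single translate $t^k\,\widetilde{\alpha_j}$. As $g_{ij}^{(k)}\ge|a_{ij}^{(k)}|$ termwise, everything reduces to the \textbf{single-level no-cancellation claim} $g_{ij}^{(k)}=|a_{ij}^{(k)}|$: the lift of $\beta(y_i)$ meets each fixed translate with consistent sign. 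I would establish this using planarity of $\widetilde{D_3}$, in which each $t^k\,\widetilde{\alpha_j}$ is a properly embedded separating arc, by arguing that a sign change between two crossings with one fixed translate forces an innermost reducing bigon, contradicting minimal position.

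The delicate part is precisely this single-level claim. The subtlety is that in a genus-zero surface a subarc of the lift of $\beta(y_i)$ that returns to the same translate need not a priori cut off a disk, so one must use the separating property of the specific arcs $\alpha_j$ together with the orbit structure to locate an innermost bigon. If the planar bigon argument proves recalcitrant, I would instead fall back on induction on the word length of $\beta$ in the generators $\sigma_1^{\pm1},\sigma_2^{\pm1}$: verify the formula directly for each generator from its action on the $y_i$, and then check that applying one further generator preserves the property that each coefficient counts, with a single sign, the crossings at its level. The base case and inductive step are concrete finite verifications, but packaging them into an invariant that survives composition is the real work.
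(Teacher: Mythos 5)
Your overall framework---interpret the Burau entries as a graded intersection pairing in the infinite cyclic cover, then rule out cancellation so the geometric count can be read off---is indeed the paper's framework, but two essential pieces are missing. First, the entries of $\mathrm M(\beta)$ are not pairings with the arcs $\alpha_j$ of the theorem. The $(j,i)$ entry is $p_{j,i}(t)=\langle\delta_j,\beta(y_i)\rangle$, where $\delta_j$ runs from the $j$-th puncture to the boundary, whereas the $\alpha_j$ are the three puncture-to-puncture arcs forming a triangle on $p_1,p_2,p_3$. The paper must first pass to the linear combinations $Q_i=p_{1,i}-tp_{2,i}$, $R_i=p_{2,i}-tp_{3,i}$, $S_i=p_{1,i}-t^2p_{3,i}$, justified by the identities $\tilde\alpha_1=\tilde\delta_1-t^{-1}\tilde\delta_2$, $\tilde\alpha_2=\tilde\delta_2-t^{-1}\tilde\delta_3$, $\tilde\alpha_3=\tilde\delta_1-t^{-2}\tilde\delta_3$ in $H_1(\tilde D_n,\widetilde{\partial D_n^+}\cup\tilde Q)$. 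Your argument, even if completed, would compute intersections with the wrong arcs; the conversion step is part of the content of the theorem.

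Second, and more seriously, the no-cancellation step is the heart of the theorem and your proposal leaves it unproved. You correctly flag it as the crux, but the innermost-bigon sketch does not work as stated: two opposite-sign crossings of the lift of $\beta(y_i)$ with a fixed translate $t^k\tilde\alpha_j$ need not cobound a disk in the planar cover, because the region between them can contain lifts of punctures; and the phenomenon is genuinely special to three punctures (Bigelow's work shows that for $n\ge5$ there are arcs with nonzero geometric intersection whose graded pairing cancels entirely). The paper's mechanism uses $n=3$ explicitly and is a parity argument rather than a single-level argument: the loop cut off by two consecutive crossings of $\beta(y_i)$ with $\alpha_j$ encloses exactly two punctures when the crossings point the same way (so the cover levels differ by exactly $2$) and one or three punctures otherwise (levels differ by $1$ or $3$); a finite check of the local pictures then shows that in $Q_i$, $R_i$, $S_i$ the sign of every monomial is determined by the parity of its exponent, so evaluation at $t=-1$ produces no cancellation and $|Q_i(-1)|$, $|R_i(-1)|$, $|S_i(-1)|$ are the geometric intersection numbers. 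Your fallback induction on word length is not carried out, and as you yourself note, the invariant that would survive composition is exactly what is missing; without some version of the level-change/parity analysis the argument does not close.
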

\begin{theorem} \label{same}If $\beta_1$ and $\beta_2$ are 3-braids and $i(\beta_1(y_i),\alpha_j)=i(\beta_2(y_i),\alpha_j)$  for each $i$ and $j$, then $\beta_1=\Delta^{2k}\beta_2$ for some integer $k$.\end{theorem}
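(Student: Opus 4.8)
The plan is to recast the statement as an injectivity claim modulo the center of $B_3$. Recall that $Z(B_3)=\langle\Delta^2\rangle$ is generated by the full twist. First I would show that the nine numbers $i(\beta(y_i),\alpha_j)$ depend only on the coset $\langle\Delta^2\rangle\beta$: since the full twist is central, left multiplication sends $M(\beta)$ to $M(\Delta^2)M(\beta)$, and one checks directly from the formulas of Theorem~\ref{main} that this leaves all nine intersection numbers unchanged (geometrically, $\Delta^2$ is the Dehn twist about a curve parallel to $\partial D$, which does not affect how the images of the $y_i$ cross the $\alpha_j$). Hence the assignment $\beta\mapsto\big(i(\beta(y_i),\alpha_j)\big)_{1\le i,j\le3}$ descends to a well-defined map on $B_3/\langle\Delta^2\rangle$, and the theorem becomes exactly the assertion that this induced map is injective: equal intersection data for $\beta_1,\beta_2$ should force $\ol{\beta_1}=\ol{\beta_2}$ in $B_3/\langle\Delta^2\rangle$, which is precisely $\beta_1=\Delta^{2k}\beta_2$.

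To prove injectivity I would use the classical identification $B_3/\langle\Delta^2\rangle\cong\mathrm{PSL}_2(\bZ)$ together with the fact that $\mathrm{PSL}_2(\bZ)$ acts faithfully on the isotopy classes (slopes) of essential curves in the $3$-punctured disk. The point is that an element of $\mathrm{PSL}_2(\bZ)$ is determined by its action on a spanning triple of slopes, so it suffices to recover, for each $i$, the class of $\beta(y_i)$ from the triple $\big(i(\beta(y_i),\alpha_1),\,i(\beta(y_i),\alpha_2),\,i(\beta(y_i),\alpha_3)\big)$.

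For this recovery step I would invert Theorem~\ref{main}: that theorem expresses each triple in terms of the entries of $M(\beta)$, and I would verify that the triple of unsigned integers pins down the (unoriented) slope of $\beta(y_i)$. Carrying this out for $i=1,2,3$ reconstructs the images of the three fixed slopes carried by $y_1,y_2,y_3$, and hence the class $\ol\beta\in\mathrm{PSL}_2(\bZ)$. Since the whole map has already been shown to factor through $B_3/\langle\Delta^2\rangle$, equality of the nine numbers then yields $\ol{\beta_1}=\ol{\beta_2}$, i.e. $\beta_1=\Delta^{2k}\beta_2$ for some $k\in\bZ$, as desired. (An equivalent route stays inside the Burau picture: reconstruct $M(\beta)$ up to the central unit $M(\Delta^{2k})$ from the nine numbers and then invoke faithfulness of the Burau representation of $B_3$.)

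The hard part will be precisely this reconstruction: showing that the \emph{unsigned} integers $i(\beta(y_i),\alpha_j)$ genuinely determine the slope of each $\beta(y_i)$ (equivalently, $M(\beta)$ up to the central unit). Geometric intersection numbers forget orientation, whereas the Burau entries are signed, graded Laurent polynomials carrying strictly more data, so the content is to prove that everything lost in passing to unsigned intersection numbers is exactly the information killed by the center. This is where the ambiguity must collapse to $\langle\Delta^2\rangle$ rather than to the smaller $\langle\Delta^4\rangle=\ker\big(B_3\to\mathrm{SL}_2(\bZ)\big)$: matching the sign-free nature of the data to the passage from $\mathrm{SL}_2(\bZ)$ to $\mathrm{PSL}_2(\bZ)$ is the step I expect to require the most care.
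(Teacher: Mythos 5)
There is a genuine gap: your proposal correctly frames the theorem as injectivity of the map $\beta\mapsto\big(i(\beta(y_i),\alpha_j)\big)$ on $B_3/\langle\Delta^2\rangle$, and correctly identifies that the entire content lies in recovering $\beta(y_i)$ from the three unsigned numbers $\big(i(\beta(y_i),\alpha_1),i(\beta(y_i),\alpha_2),i(\beta(y_i),\alpha_3)\big)$ up to boundary twisting --- but then you defer exactly that step (``the hard part will be precisely this reconstruction''). Without it, the argument reduces the theorem to an equivalent unproved statement. The paper's proof \emph{is} that reconstruction, carried out combinatorially: one first observes that the triple $(a_i,b_i,c_i)$ must be degenerate (two of the numbers sum to the third, because in the train-track picture inside the triangle $\alpha_1\alpha_2\alpha_3$ one of the three switch weights $u,v,w$ is forced to vanish --- otherwise the complementary weights all vanish and a side of the triangle is missed). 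This pins the interior of the triangle down to two configurations, and a case analysis of six exterior train-track configurations eliminates four of them and shows the remaining choice is determined by whether $a_i>b_i$ or $b_i>a_i$. The only residual ambiguity is the amount of twisting in a collar of $\partial D_3$, which is exactly multiplication by a power of $\Delta^2$; faithfulness of the action of $B_3$ on $\pi_1(D_3,p_0)$ then finishes the proof. Some version of this case analysis is unavoidable, and your proposal contains none of it.

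A secondary problem is the proposed route through slopes and $\mathrm{PSL}_2(\bZ)$. The $y_i$ are \emph{based} loops encircling single punctures; every $\beta(y_i)$ is freely homotopic to some $y_j$, so the free homotopy class (and hence any ``slope'' under the branched-cover dictionary, which applies to arcs joining punctures, not to these loops) carries essentially no information about $\beta$. What must be recovered is the based homotopy class of $\beta(y_i)$ in $\pi_1(D_3,p_0)$ rel the boundary basepoint, and that is precisely why the reconstruction has to be done at the level of embedded arcs relative to the triangle $\alpha_1\alpha_2\alpha_3$ rather than at the level of slopes. Your closing worry about whether the ambiguity collapses to $\langle\Delta^2\rangle$ rather than $\langle\Delta^4\rangle$ is resolved by the same geometric picture: the indeterminacy left after the case analysis is literally the number of boundary-parallel twists, which is a single integer $k$ with $\beta(y_i)=\Delta^{2k}(y_i')$, not a sign ambiguity in a linear representation.
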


\noindent We start with a few definitions.

\subsection{The Braid Group}
The braid group $B_n$ can be defined as the group generated by $\sigma_1, \sigma_2,\dots,\sigma_{n-1}$ with relations:
\begin{enumerate}
\item $\sigma_i\sigma_j=\sigma_j\sigma_i$ if $|i-j|> 1$, and
\item $\sigma_i\sigma_j\sigma_i=\sigma_j\sigma_i\sigma_j$ if $|i-j|=1$.
\end{enumerate}
The braid group can also be described as a subgroup of the automorphism group of a free group or as a group of geometric objects up to isotopy.  In this paper we will use the interpretation of $B_n$ as the mapping class group of the space $D_n$, where $D_n$ is the unit disk in $\mathbb R^2$ with the set $Q=\{p_1,p_2,\dots,p_n\}$ of $n$ interior points of $D^2$ (assumed to lie regularly spaced on the x-axis) excised.  We call these points \emph{punctures}.  We define the $n$-braid $\Delta_n$ to be $$(\sigma_1\sigma_2\dots\sigma_{n-1})(\sigma_1\sigma_2\dots\sigma_{n-2})\dots(\sigma_1\sigma_2)(\sigma_1).$$  The square of $\Delta_n$ generates the infinite cyclic center of $B_n$.  As a mapping class, $\Delta_n^2$ is a full positive Dehn twist about a curve parallel to $\partial D_n$.  We denote $\Delta_3$ as $\Delta$ throughout this paper.

Let $S$ be a genus one surface with one boundary component.   We will make use of the fact that the group $B_3$ is isomorphic to the relative mapping class group of $S$, denoted here as $\mathrm{Mod(S,\partial S)}$.  This isomorphism is given by sending generators $\sigma_1, \sigma_2$ to positive Dehn twists $T_x, T_y$ about dual closed curves $x$ and $y$ in $S$.  We will also rely on the homomorphism $\psi$ in the short exact sequence

 \begin{equation*}
\xymatrix{
1\ar[r] &\bZ\ar[r] & B_3\ar[r]^{\psi} & SL_2(\bZ)\ar[r] & 1.
}
\end{equation*}  

By consideration of the representation onto $SL_2(\bZ)$, we use the Thurston classification of surface diffeomorphisms (which can be found in \cite{8}) to determine whether a braid representative $h$ is periodic, reducible, or pseudo-Anosov.  We do this simply by calculating the trace.  Say $\hat h$ is the image of $h$ under $\psi$. If $|\mathrm{tr}(\hat h)|=2$, then $h$ is reducible.  If $|\mathrm{tr}(\hat h)|<2$, $h$ is periodic.  Finally, if $|\mathrm{tr}(\hat h)|>2$, $h$ is pseudo-Anosov.  We move forward using this classification to identify right-veering braids, however, first we must address the question of what a right-veering braid is.

\subsection{Right-Veering Surface Homeomorphisms}
Throughout,  let $\Sigma$ denote a compact, oriented surface with nonempty boundary.   Let $\alpha$ and $\beta$ be smooth properly embedded oriented arcs in $\Sigma$ with a common basepoint  $x\in \partial\Sigma$.  We also assume that $\alpha$ and $\beta$ intersect minimally and transversely.  Consider the vectors $v_{\alpha}$ and $v_{\beta}$ that are tangent to $\alpha$ and $\beta$, respectively, at $x$.  We say that $\beta$ is \itshape to the right \upshape of $\alpha$ if either $\alpha$ is isotopic to $\beta$ or $\{v_{\beta}, v_{\alpha}\}$ corresponds to the orientation of $\Sigma$ at $x$.  Alternatively, if $\pi: \tilde\Sigma\rightarrow\Sigma$ is the universal cover of $\Sigma$.  Let $\tilde x\in\partial \tilde\Sigma$ be a lift of $x$.  Consider lifts $\tilde \alpha$ and $\tilde\beta$ of $\alpha$ and $\beta$ with $\tilde\alpha(0)=\tilde\beta(0)=\tilde x$.  The arc $\tilde\alpha$ divides $\tilde\Sigma$ into two regions.  The region to the left where the boundary orientation induced from the region coincides with the orientation on $\tilde\alpha$ and the region to the right.  Here, we say that $\beta$ is \emph{to the right} of $\alpha$.  We now define the monoid of right-veering maps.

\begin{definition}Let $h$ represent an element of $\mathrm{Mod}(\Sigma,\partial\Sigma)$.  Then $h$ is right-veering if for every $x\in\partial\Sigma$ and every  properly embedded oriented arc $\alpha$ based at $x$, we have that $h(\alpha)$ is to the right of $\alpha$.  We denote the set of all such maps $\mathrm{Veer}(\Sigma,\partial\Sigma)=\mathrm{Veer}$.\end{definition}

Similarly, we say that $\beta$ is \itshape to the left \upshape of $\alpha$ if $\{v_{\alpha},v_{\beta}\}$ corresponds to the orientation of $\Sigma$ at $x$, and so we may also refer to a map $h$ as \textit{left-veering}. These homeomorphisms are especially helpful in contact topology.  In \cite{1} Honda, Kazez, Mati\'c prove that a contact 3-manifold $(M,\xi)$ is tight if and only if all of its compatible open book decompositions $(\Sigma,h)$ have $h\in \mathrm{Veer}$. Furthermore, they prove in \cite{2} that if $\Sigma$ has genus equal to one and one boundary component, $(M,\xi)$ is tight if and only if some compatible open book has right-veering monodromy.

\subsection{The Burau Representation}

Now that we have an handle on what a right-veering map is, we introduce the tool that we will use to identify right-veering $3$-braids: the Burau representation of $B_n$.  The Burau representation can be defined in several different ways.   The description given here comes from the viewpoint of covering spaces.   Again, we let $D_n$ be the $n$-punctured disc with basepoint $p_0$ on the boundary.  Identify $\pi_1(D_n,p_0)$ in the usual way with the free group $F_n$, and let $\epsilon: \pi_1(D_n,p_0)\rightarrow\bZ$ be the exponent sum map which takes the element $y_{r_1}^{\alpha_1}y_{r_2}^{\alpha_2}\dots y_{r_s}^{\alpha_s}$ to the integer $\sum_{i=1}^s\alpha_i$.  Let ($\tilde D_n, q$) be the (regular) covering space of $D_n$ which corresponds to the kernel of this homomorphism.  

Concretely, $\tilde D_n$ can be constructed by making a series of cuts from the boundary of $D_n$ to each puncture (cut along the arcs $\delta_i$ described in the next section), then making a stack of $\bZ$ copies of this cut open space.  At each component, glue the right side of each cut to the left side of the corresponding cut in the component directly above it.  We end up with a space that resembles a parking garage with $n$ ``ramps'' and infinitely many ``levels".

Since $\epsilon: \pi_1(D_n,p_0)\rightarrow\bZ$ is a surjection, the group of deck transformations is isomorphic to $\bZ=\langle\tau\rangle$.  Letting $\Lambda=\bZ[t,t^{-1}]$, the induced action $\tau_{\ast}$ of $\tau$ on $H_1(\tilde D_n)$ endows $H_1(\tilde D_n)$ with a $\Lambda$-module structure by setting $t\cdot\gamma=\tau_{\ast}(\gamma)$, for $\gamma \in H_1(\tilde D_n)$.  Viewed as a $\Lambda$-module, $H_1(\tilde D_n)$ is free of rank $n-1$.   A chosen representative $h: D_n\rightarrow D_n$ will lift uniquely to a homeomorphism $\tilde h:\tilde D_n\rightarrow \tilde D_n$ that pointwise fixes the fiber over $p_0$.  Subsequently, $\tilde h$ induces the $\Lambda$-module automorphism $\tilde h_{\ast}:H_1(\tilde D_n)\rightarrow H_1(\tilde D_n)$.  We now make the following definition given a basis $\mathcal{B}_r=\{\gamma_1,\gamma_2,\dots,\gamma_{n-1}\}$ of $H_1(\tilde D_n)$.
\begin{definition}Given an element $\beta$ of the braid group and an element  $h$ which represents it, the \emph{reduced Burau representation} is the homomorphism:
$$\rho_r: B_n\rightarrow GL_{n-1}(\Lambda)$$
\noindent where $\rho_r(\beta)$ equals the automorphism $\tilde h_{\ast}$.  Let $\mathrm M_r(\beta)$ denote the matrix for $\rho_r(\beta)$ with respect to the basis $\mathcal{B}_r$.  We call this the \emph{reduced Burau matrix} of $\beta$.
\end{definition}

In a similar way, we may define the unreduced Burau representation.  Let  $H_1(\tilde D_n,\tilde p_0)$ be the first homology group of $\tilde D_n$ relative to the subset $\tilde p_0=q^{-1}(p_0)$.  Here, $\tilde p_0$ denotes the full preimage of $p_0$.    We lift the set of generators $y_1, y_2,\dots, y_n$ of $\pi_1(D_n,p_0)$ to a set of generators for $H_1(\tilde D_n,\tilde p_0)$.  
Now $H_1(\tilde D_n,\tilde p_0)$ can be thought of as a free $\Lambda$-module of rank $n$.  With basis $\mathcal{B}=\{\tilde y_1,\tilde y_2,\dots,\tilde y_n\}$ of $H_1(\tilde D_n,\tilde p_0)$, we make the following definition.  

\begin{definition} Given an element of the braid group $\beta$ and a homeomorphism $h$ that represents it, the \textit{unreduced Burau representation} is the homomorphism

$$\rho:B_n\rightarrow GL_n(\Lambda)$$

\noindent where $\rho(\beta)$ equals the automorphism $\tilde h_{\ast}: H_1(\tilde D_n,\tilde p_0)\rightarrow H_1(\tilde D_n,\tilde p_0)$. Let $\mathrm M(\beta)$ denote the matrix for $\rho(\beta)$ with respect to the basis $\mathcal{B}$.  We call this the \emph{unreduced Burau matrix} of $\beta$.\end{definition}

It is a well-known fact that Burau is faithful for $n=3$ (see \cite{5}).  In \cite{4}, Moody showed that Burau is not faithful for $n\geq9$, and this was sharpened by Long and Paton in \cite{3} to $n\geq6$.  The most recent improvement was made by Bigelow in \cite{7} when it was shown that Burau is not faithful for $n=5$. 

\subsection{A Geometric Interpretation}\label{geometric}
Unreduced Burau matrices can be obtained in several different ways, but perhaps the most beautiful approach was given by Moody in \cite{4}.  In this section we will describe his method for computing unreduced Burau matrices.   
Let  $y_i$ and $\delta_j$ for $i,j=1,\dots,n$ be as shown in Figure~\ref{n-disk for unreduced Burau}.  The line $L$ divides the disk into upper and lower halves.  We let $\partial D_n^+$ denote the upper half of $\partial D_n$.
\begin{figure}[htbp]
\begin{center}
\includegraphics[width=1.5in]{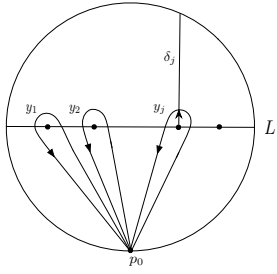}
\caption{The n-punctured disk}
\label{n-disk for unreduced Burau}
\end{center}
\end{figure}
Define a map $\langle\,\cdot,\cdot\rangle:H_1(\tilde D_n,\widetilde{\partial D_n^+}\cup\tilde Q)\times H_1(\tilde D_n,\tilde p_0)\rightarrow \Lambda$ as follows.  If $\tilde\alpha$ represents some homology class in $H_1(\tilde D_n,\tilde p_0)$, and $\tilde\delta_j$ is a lift of $\delta_j$, we define

$$\langle\delta_j,\alpha\rangle=\sum_{k\in\bZ}(t^k\tilde\delta_j,\tilde\alpha)t^k\in\Lambda$$ where $(t^k\tilde\delta_j,\tilde\alpha)$ is the algebraic intersection number of the two arcs in $\tilde D_n$.

It is clear that this pairing is only well-defined up to choice of the lifts and choice of orientation.  We orient $\delta_j$ from the puncture to the boundary and orient $y_i$ counterclockwise.  Then orientations of the lifts are induced from the arcs in the base space.  We use the following convention for choosing our lifts.  Choose component $\tilde p_0^{\star}$ of $\tilde p_0$, and declare the sheet of $\tilde D_n$ containing $\tilde p_0^{\star}$ to be the $0$-th level of $\tilde D_n$.  Now, define $\tilde y_j$ to be the unique lift of $y_j$ that starts at $\tilde p_0^{\star}$ and ends at $t\tilde p_0^{\star}$ and $\tilde \delta_j$ to be the unique lift of $\delta_j$ that intersects $\tilde y_j$.  Now we can write the action of any braid $\beta$ on basis vector $\tilde y_j$  of $H_1(\tilde D_n,\tilde p_0)$ as:
$$\rho(\beta)\tilde y_j = \Big(\langle\delta_1,\beta( y_j)\rangle\Big)\tilde y_1+\Big(\langle\delta_2,\beta(y_j)\rangle\Big)\tilde y_2+\dots+\Big(\langle\delta_n,\beta(y_j)\rangle\Big)\tilde y_n.$$

Using this method allows us to work in $D_n$ in a very concrete way instead of $\tilde D_n$.  The key is in keeping track of the level changes.  For example, we see the action of the braid $\sigma_1\in B_3$ on $\pi_1$ generators below in Figure~\ref{sigma_1}.
\begin{figure}[htbp]
\begin{center}
\includegraphics[height=1.25in]{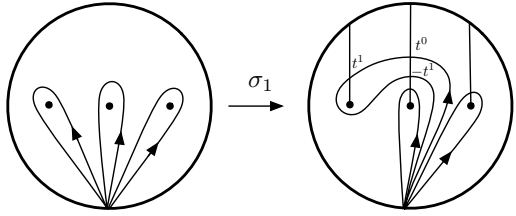}
\caption{The exponent of $t$ indicates the level on which the intersection occurs.}
\label{sigma_1}
\end{center}
\end{figure}
In particular, one can calculate the unreduced matrix for $\sigma_1\in B_3$ as $$\mathrm M(\sigma_1)=\begin{pmatrix}0&t&0\\ 1&1-t&0\\ 0&0&1\end{pmatrix}.$$  Similarly, we can calculate $\sigma_2\in B_3$ as $$\mathrm M(\sigma_2)=\begin{pmatrix}1&0&0\\ 0&0&t\\ 0&1&1-t\end{pmatrix}.$$  


\section{Reducible and Periodic Cases}

\indent In this section, we will use the reduced Burau representation to classify right-veering braids that are reducible or periodic.  We do so largely by appealing to work done by Honda, Kazez, and Mati\'c in~\cite{2} and by using the commutative diagram
 
 \begin{equation*}
\xymatrix{B_3\ar[r]^{\psi}\ar[d]_{\rho_r} & SL_2(\bZ)\\
\rho_r(B_3)\ar[ur]^{\pi}
}
\end{equation*}  
In this diagram, $\pi$ is specialization at $t=-1$, and $\psi$ the quotient map from $B_3$ to $B_3$ modulo its infinite cyclic center as before.  Geometrically, the homomorphism $\psi$ can be seen as collapsing the boundary of $S$ to a point.  For a braid $\beta$, let $\wh\beta$ denote its image under $\psi$ in $SL_2(\bZ)$.

\subsection{The Reducible Case}
We use a characterization of reducible maps of $S$ (see \cite{2}, for example) that says that any reducible map of $S$ can be written as $T_a^kT^m_b$, where $a$ is a boundary-parallel curve, and $b$ is some nonseparating curve in $S$.  Given the 2-fold branched cover of $D_3$ by $S$ (the branch set being the three points shown below), we see that a half twist about an arc from $p_i$ to $p_{i+1}$ in $D_3$ lifts to a full Dehn twist about some nonseparating curve in $S$.  
 
\begin{figure}[htbp]
\begin{center}
\includegraphics[height=2in]{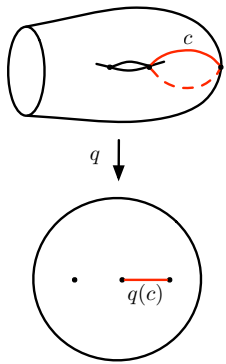}
\caption{$\sigma_2$ lifts to $T_c$}
\label{x_3}
\end{center}
\end{figure}

It follows that reducible elements of $B_3$ differ from a conjugate of a power of $\sigma_1$ by a powers of $\Delta^2$.  That is, any reducible braid $\omega$ can be written in the form $\Delta^{2k}\tau\sigma_1^m\tau^{-1}$, for some $\tau\in B_3$. We also use the following result of Honda, Kazez, and Mati\'c that yields the veering information of a reducible map of $S$ based upon the form above.

\begin{lemma}\label{hkm}(Honda, Kazez, Mati\'c \cite{2}) Let $h=T_a^kT^m_b$ represent a reducible mapping class in $\mathrm{Mod(S,\partial S)}$.  Then $h$ is right-veering if and only if either $k>0$ or $k=0$ and $m\geq0$. \end{lemma}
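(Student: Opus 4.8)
The plan is to reduce the entire statement to the single assertion that a positive boundary twist forces right-veering, and then bootstrap. Throughout I will use three standard facts about $\mathrm{Veer}(S,\partial S)$: (i) the identity is right-veering and $\mathrm{Veer}$ is closed under composition, so it is a monoid, and every positive Dehn twist about an essential curve lies in it; (ii) $\mathrm{Veer}\cap\mathrm{Veer}^{-1}=\{\mathrm{id}\}$, i.e.\ a map and its inverse are both right-veering only when the map is trivial (this follows from the definition, since $\alpha$ to the right of $h(\alpha)$ and $h(\alpha)$ to the right of $\alpha$ forces $h(\alpha)\simeq\alpha$); and (iii) since $a$ is boundary-parallel and $b$ can be isotoped off a collar of $\partial S$, the twists $T_a$ and $T_b$ commute, and no $T_a^kT_b^m$ is trivial unless $k=m=0$ (pass to $\mathrm{PSL}_2(\bZ)=B_3/\langle\Delta^2\rangle$, where $T_a=\Delta^2$ dies while the image of $T_b$ has infinite order).

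First I would dispatch the ``only if'' direction by inverting. Suppose the condition fails, so either $k<0$, or $k=0$ and $m<0$. Then $h^{-1}=T_a^{-k}T_b^{-m}$ has $-k>0$, or $-k=0$ with $-m>0$; in either case $h^{-1}$ satisfies the hypothesis of the ``if'' direction and so is right-veering. Since $k,m$ are not both zero, fact (iii) gives $h\neq\mathrm{id}$, and then fact (ii) prevents $h$ from being right-veering. Thus everything comes down to the ``if'' direction.

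For the ``if'' direction I split on $k$. When $k=0$ and $m\ge 0$, the map $h=T_b^m$ is a (possibly empty) product of positive Dehn twists, hence right-veering by (i). When $k\ge 1$, I would peel off $T_a^{k-1}$: writing $h=T_a^{k-1}\,(T_aT_b^m)$ with $T_a^{k-1}\in\mathrm{Veer}$ (it is $\mathrm{id}$ or a positive twist power), the monoid property reduces the claim to showing that $T_aT_b^m$ is right-veering for \emph{every} $m\in\bZ$. For $m\ge 0$ this is again a product of positive twists; the genuine content is $m<0$, where one full boundary twist must dominate arbitrarily many negative twists about the interior curve $b$.

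The hard part will be exactly this domination. The naive comparison of initial tangent vectors at the basepoint fails: because $T_b^m$ is supported away from $\partial S$ it fixes the germ of any arc $\alpha$ at $x$, so its effect appears only after isotoping to minimal position in the interior. I would therefore argue in the universal cover $\tilde S$, where the component $\ell\subset\partial\tilde S$ through a lift $\tilde x$ of $x$ carries a well-defined ``further to the right'': the boundary twist $T_a=T_\partial$ lifts to a translation of $\ell$ by one full period to the right, and this dominates the bounded interior reshuffling produced by $T_b^m$, uniformly in $m$, placing the lift of $T_aT_b^m(\alpha)$ strictly in the right region cut out by $\tilde\alpha$. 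Equivalently and more cleanly, one computes the fractional Dehn twist coefficient: since $a$ is boundary-parallel while $b$ is not, $c(T_a^kT_b^m)=k$, and the Honda--Kazez--Mati\'c criterion $c(h)>0\Rightarrow h\in\mathrm{Veer}$ settles the case $k\ge 1$ at one stroke. I expect establishing this boundary-domination statement, whether through the universal-cover picture or through the boundary-only sensitivity of $c(\cdot)$, to be the crux, with all remaining steps being formal consequences of the monoid structure.
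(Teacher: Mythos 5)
The paper does not actually prove this lemma: it is quoted verbatim from Honda--Kazez--Mati\'c \cite{2} and used as a black box, so there is no in-paper argument to compare yours against. Judged on its own terms, your reduction scheme is sound and is essentially the right way to organize the proof: facts (i) and (iii) are correct, the inversion trick cleanly disposes of the ``only if'' direction, and the monoid structure correctly reduces everything to showing $T_aT_b^m\in\mathrm{Veer}$ for $m<0$. One small elision in fact (ii): from $h,h^{-1}\in\mathrm{Veer}$ you get that $h$ fixes the isotopy class of every properly embedded arc, and you still need the Alexander-method step that such a mapping class is trivial in $\mathrm{Mod}(S,\partial S)$; this is true but is not ``from the definition.''

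The genuine gap is that the crux --- one positive boundary twist dominating $T_b^m$ for arbitrarily negative $m$ --- is asserted rather than proved, and the universal-cover sketch as written would not survive being made precise. It is not true that $T_b^m$ produces ``bounded interior reshuffling'' of $\tilde S$ uniformly in $m$: lifts of arcs are displaced without bound as $m\to-\infty$. What is actually uniform in $m$ is where the terminal endpoint of the lift of $T_aT_b^m(\alpha)$ lands on $\partial\tilde S$ relative to $\tilde\alpha$ (equivalently, the deck transformation recording the relative homotopy class), and establishing that this point stays in the closed right region for every $m$ is precisely the content of the lemma; your sketch restates the claim rather than proving it. The fractional Dehn twist route does close the argument, but only by importing two further Honda--Kazez--Mati\'c results (the computation $c(T_a^kT_b^m)=k$ and the criterion that $c>0$ forces right-veering, with the $c=0$ case needing separate care), at which point one has in effect reproduced the paper's citation of \cite{2} rather than a self-contained proof. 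So the architecture is right, but the load-bearing step still rests on an external theorem or on an argument you have not yet supplied.
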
 

Thus, given a reducible braid $\beta=\Delta^{2k}\tau\sigma_1^m\tau^{-1}$, our task is to extract the values $k$ and $m$ from the reduced Burau matrix $\mathrm M_r(\beta)$, then use Lemma~\ref{hkm} to determine whether $\beta$ is right-veering or not.  We make several statements about the trace of $\mathrm M_r(\beta)$.  In particular, we have the following proposition and corollaries which gives sufficient conditions for $\beta$ to be right-veering.  

\begin{proposition}\label{trace}Let $\beta \in B_3$ be reducible.  Then $\mathrm{tr}(\mathrm M_r(\beta))=t^{3k}+(-1)^mt^{3k+m}$.\end{proposition}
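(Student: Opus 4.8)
The plan is to exploit the normal form for reducible $3$-braids recorded just above the statement, namely $\beta=\Delta^{2k}\tau\sigma_1^m\tau^{-1}$ for some $\tau\in B_3$, together with two elementary features of the trace: it is invariant under conjugation, and scalar factors pull out of it. Since $\rho_r$ is a homomorphism, $\mathrm M_r(\beta)=\mathrm M_r(\Delta^{2k})\,\mathrm M_r(\tau)\,\mathrm M_r(\sigma_1^m)\,\mathrm M_r(\tau)^{-1}$, so the whole computation reduces to understanding the central matrix $\mathrm M_r(\Delta^{2})$ and the eigenvalues of $\mathrm M_r(\sigma_1)$.

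First I would pin down $\mathrm M_r(\Delta^2)$ by direct computation: writing the $2\times2$ reduced Burau matrices of $\sigma_1,\sigma_2$ with respect to $\mathcal B_r$, forming $\mathrm M_r(\Delta)=\mathrm M_r(\sigma_1\sigma_2\sigma_1)$, and squaring, one finds $\mathrm M_r(\Delta^2)=t^3 I$. Conceptually this is expected: $\Delta^2$ generates the center of $B_3$ and the reduced Burau representation of $B_3$ is irreducible, so Schur's lemma already forces $\mathrm M_r(\Delta^2)$ to be a scalar matrix $cI$, with $c^2=\det\mathrm M_r(\Delta^2)=(-t)^6=t^6$ pinning down $c=\pm t^3$; the explicit computation fixes the sign as $+t^3$. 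In either case $\mathrm M_r(\Delta^{2k})=t^{3k}I$ is central and commutes with every factor.

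Next I would compute $\mathrm{tr}(\mathrm M_r(\sigma_1^m))$. The matrix $\mathrm M_r(\sigma_1)$ is triangularizable with eigenvalues $-t$ and $1$ — these are forced by $\det\mathrm M_r(\sigma_1)=-t$ together with $\mathrm{tr}(\mathrm M_r(\sigma_1))=1-t$ — so $\mathrm M_r(\sigma_1^m)$ has eigenvalues $(-t)^m$ and $1$, whence $\mathrm{tr}(\mathrm M_r(\sigma_1^m))=(-1)^m t^m+1$. Combining the two computations and using that $t^{3k}I$ commutes with everything while conjugation by $\mathrm M_r(\tau)$ leaves the trace unchanged, I obtain $\mathrm{tr}(\mathrm M_r(\beta))=t^{3k}\,\mathrm{tr}(\mathrm M_r(\sigma_1^m))=t^{3k}\big(1+(-1)^m t^m\big)=t^{3k}+(-1)^m t^{3k+m}$, as claimed.

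The main obstacle I anticipate is the bookkeeping around $\mathrm M_r(\Delta^2)$: establishing cleanly that it equals the scalar $t^3 I$ with the correct sign (not $-t^3 I$), and ensuring the reduced Burau matrices I use are exactly those attached to the paper's basis $\mathcal B_r$, since the exponent $3k$ in the stated formula depends on this normalization. Once that central scalar is nailed down, the remainder is a formal manipulation of traces.
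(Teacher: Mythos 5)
Your proposal is correct and follows essentially the same route as the paper: decompose $\beta=\Delta^{2k}\tau\sigma_1^m\tau^{-1}$, use $\mathrm M_r(\Delta^{2k})=t^{3k}I$ and $\mathrm{tr}(\mathrm M_r(\sigma_1^m))=1+(-1)^mt^m$, and combine via conjugation-invariance of the trace. The only difference is that you supply justifications (eigenvalues of $\mathrm M_r(\sigma_1)$, Schur's lemma for the central scalar) that the paper simply asserts.
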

\begin{proof}
We have that the $\mathrm{tr}(\mathrm M_r(\sigma_1^m))=1+(-1)^mt^m=\mathrm{tr}(\mathrm M_r(\tau\sigma_1^m\tau^{-1}))$.  Now since $\mathrm M_r(\Delta^{2k})=\begin{pmatrix}t^{3k}&0\\0&t^{3k}\end{pmatrix}$, we may conclude that
$$
\mathrm{tr}(\mathrm M_r(\Delta^{2k}\tau\sigma_1^m\tau^{-1})) =t^{3k}(1+(-1)^mt^m)=t^{3k}+(-1)^mt^{3k+m}.
$$

\end{proof}

\begin{corollary}Let $\beta \in B_3$ be reducible. If $\mathrm{tr}(\mathrm M_r(\beta))=t^d-t^r$ for integers $d$ and $r$, then $d=3k$.  If $\mathrm{tr}(\mathrm M_r(\beta))=t^d+t^r$ is such that $3 \nmid r$, then $d=3k$.  In either of these cases, $\beta$ is right-veering if and only if $d>0$.\end{corollary}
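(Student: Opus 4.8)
The plan is to read the entire corollary off the explicit trace formula of Proposition~\ref{trace} and then feed the recovered exponent into Lemma~\ref{hkm}. Writing $\beta=\Delta^{2k}\tau\sigma_1^m\tau^{-1}$ in the reducible normal form, the branched-cover discussion identifies $\beta$ with $T_a^kT_b^m$ ($a$ boundary-parallel, $b$ nonseparating), and Proposition~\ref{trace} gives $\mathrm{tr}(\mathrm M_r(\beta))=t^{3k}+(-1)^m t^{3k+m}$. The substance of the first two sentences is then purely an exercise in deciding, from the shape of this two-monomial Laurent polynomial, which exponent is the multiple of $3$ contributed by the central factor $\Delta^{2k}$; once that exponent is pinned down as $d$, the veering claim is a direct application of the Honda--Kazez--Mati\'c inequality.

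First I would handle the hypothesis $\mathrm{tr}(\mathrm M_r(\beta))=t^d-t^r$ by a sign argument. The monomial $t^{3k}$ always has coefficient $+1$, while $(-1)^m t^{3k+m}$ has coefficient $(-1)^m$, so a genuine difference of two monomials can occur only when $m$ is odd; the positively signed term is then forced to be $t^{3k}$, giving $d=3k$ and $r=3k+m$. For the hypothesis $\mathrm{tr}(\mathrm M_r(\beta))=t^d+t^r$ with $3\nmid r$, both coefficients are $+1$, so $m$ is even, and here I would switch from signs to divisibility: since $3\mid 3k$ always, the exponent not divisible by $3$ must be $3k+m$ (which also forces $3\nmid m$), so $r=3k+m$ and again $d=3k$. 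This is exactly why two hypotheses are needed, and it is the only real content of the identification step: the sign structure resolves the odd case and the residue mod $3$ resolves the even case.

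With $d=3k$ established, Lemma~\ref{hkm} says $\beta$ is right-veering precisely when $k>0$, or $k=0$ and $m\geq 0$. Since $k\in\bZ$ and $d=3k$, the cases $d>0$ and $d<0$ are unambiguous: $d>0\iff k>0\Rightarrow\beta$ right-veering, and $d<0\iff k<0\Rightarrow\beta$ not right-veering, which yields the stated criterion $\beta\ \text{right-veering}\iff d>0$. The step I expect to require the most care is the boundary $d=0$, i.e.\ $k=0$, where veering is governed by the sign of $m$ rather than by $k$; here I would note that in both hypotheses the surviving exponent satisfies $r=3k+m=m$, so that the $k=0$, $m\geq 0$ sub-case of Lemma~\ref{hkm} is still detectable through $r$, and I would be careful to phrase the equivalence so that this sub-case is captured rather than silently dropped. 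Everything upstream of this---the recovery of $3k$---follows mechanically from Proposition~\ref{trace}, so the genuine work is confined to the bookkeeping of signs, the mod-$3$ comparison of exponents, and the endpoint of the Honda--Kazez--Mati\'c inequality.
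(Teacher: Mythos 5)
Your identification of $d=3k$ is exactly the content the paper leaves implicit: its proof of this corollary is the single line ``Immediate from Proposition~\ref{trace} and Lemma~\ref{hkm},'' and your sign argument in the odd-$m$ case together with the mod-$3$ argument in the even-$m$ case is the right way to fill that in. The one place you go beyond the paper is the boundary case $d=0$, and you are right to worry about it: taking $\beta=\sigma_1$ (so $k=0$, $m=1$) gives $\mathrm{tr}(\mathrm M_r(\sigma_1))=1-t=t^0-t^1$, hence $d=0$, yet $\sigma_1$ is right-veering by Lemma~\ref{hkm} since $k=0$ and $m\geq 0$; so the equivalence ``right-veering iff $d>0$'' as literally stated fails at $d=0$ and must be supplemented, as you propose, by the condition $r\geq 0$ when $d=0$ (where indeed $r=m$). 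The paper's one-line proof does not acknowledge this, so your version is the more careful one; otherwise the two arguments coincide.
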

\begin{proof} 
Immediate from Proposition~\ref{trace} and Lemma~\ref{hkm}.
\end{proof}
    
\begin{corollary}Let $\beta \in B_3$ be reducible.  
\begin{enumerate}
\item If $\mathrm{tr}(\mathrm M_r(\beta))=t^d+t^r$ for nonnegative integers $d$ and $r$, then $\beta$ is right-veering.
\item If $\mathrm{tr}(\mathrm M_r(\beta))=t^d+t^r$ for nonpositive integers $d$ and $r$, then $\beta$ is left-veering.
\end{enumerate}
\end{corollary}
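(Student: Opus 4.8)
The plan is to read the twist exponents $k$ and $m$ straight out of the hypothesis by matching the monomials of the trace, and then feed them into Lemma~\ref{hkm}. Since $\beta$ is reducible we may write $\beta = \Delta^{2k}\tau\sigma_1^m\tau^{-1}$, and Proposition~\ref{trace} gives $\mathrm{tr}(\mathrm M_r(\beta)) = t^{3k} + (-1)^m t^{3k+m}$, where these $k$ and $m$ are exactly the exponents appearing in the form $T_a^k T_b^m$ to which Lemma~\ref{hkm} applies. The whole argument turns on comparing this expression with the assumed form $t^d + t^r$.

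The key step is a parity observation. Because Laurent polynomials in $\Lambda = \bZ[t,t^{-1}]$ have unique monomial expansions, the assumption that $\mathrm{tr}(\mathrm M_r(\beta))$ equals $t^d + t^r$ with \emph{both} coefficients $+1$ forces $(-1)^m = +1$: indeed, if $m$ were odd then $\mathrm{tr}(\mathrm M_r(\beta)) = t^{3k} - t^{3k+m}$ would be a difference of two distinct monomials and could never be written as $t^d + t^r$. Hence $m$ is even and the exponents agree as a multiset, $\{d,r\} = \{3k,\, 3k+m\}$.

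For part (1), nonnegativity of $d$ and $r$ then says $3k \geq 0$ and $3k + m \geq 0$ (both exponents lie in $\{d,r\}$, so no ambiguity in which is which), and in particular $k \geq 0$. If $k > 0$, Lemma~\ref{hkm} gives at once that $\beta$ is right-veering; if instead $k = 0$, then $3k + m = m \geq 0$, and the $k=0$ clause of Lemma~\ref{hkm} again yields that $\beta$ is right-veering. This settles (1).

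For part (2) I would reduce to (1) via the duality between the two veering conditions: an arc is to the right of a second exactly when the second is to the left of the first, so $h$ is left-veering if and only if $h^{-1}$ is right-veering. Since $\Delta^2$ is central, $\beta^{-1} = \Delta^{-2k}\tau\sigma_1^{-m}\tau^{-1}$ is reducible with parameters $-k$ and $-m$, and, using that $m$ is even, its trace is $t^{-3k} + t^{-3k-m}$; its exponents are nonnegative precisely when those of $\beta$ are nonpositive. Thus the hypothesis of (2) for $\beta$ is exactly the hypothesis of (1) for $\beta^{-1}$, whence $\beta^{-1}$ is right-veering and $\beta$ is left-veering. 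The one point that is not purely formal is this duality: Lemma~\ref{hkm} is phrased only for right-veering maps, so the main thing to nail down is the equivalence $h \text{ left-veering} \iff h^{-1} \text{ right-veering}$ directly from the definition of ``to the right'' and ``to the left''; the sign and parity bookkeeping in part (1) is routine once uniqueness of monomial expansions is invoked.
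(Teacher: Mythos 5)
Your proof is correct, and part (1) follows the paper's argument essentially verbatim: match the monomials of $t^{3k}+(-1)^m t^{3k+m}$ against $t^d+t^r$ to force $m$ even and $\{d,r\}=\{3k,\,3k+m\}$, conclude $k\geq 0$ (and $m\geq 0$ when $k=0$), and apply Lemma~\ref{hkm}. The only divergence is in part (2): the paper simply runs ``an analogous argument'' with all signs reversed, which implicitly relies on the left-veering mirror of Lemma~\ref{hkm} ($h=T_a^kT_b^m$ is left-veering iff $k<0$, or $k=0$ and $m\leq 0$), whereas you reduce to part (1) by passing to $\beta^{-1}=\Delta^{-2k}\tau\sigma_1^{-m}\tau^{-1}$ and invoking the equivalence between $h$ being left-veering and $h^{-1}$ being right-veering. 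Both routes are sound; yours has the small advantage of using only the right-veering statement of the Honda--Kazez--Mati\'c lemma as actually quoted in the paper, at the cost of having to justify the inversion duality you correctly flag as the one non-formal step. That duality does follow from the definitions: applying the orientation-preserving, boundary-fixing homeomorphism $h^{-1}$ to the pair $(\alpha,h(\alpha))$ converts ``$h(\alpha)$ is to the left of $\alpha$'' into ``$\alpha$ is to the left of $h^{-1}(\alpha)$,'' i.e.\ ``$h^{-1}(\alpha)$ is to the right of $\alpha$,'' with the isotopic case handled trivially.
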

\begin{proof}
If both $d$ and $r$ are nonnegative integers, then $k\geq 0$.  If $k>0$, then $\beta$ is right-veering.  If $k=0$, then $m$ is nonnegative which also yields $\beta$ right-veering.  An analogous argument is made if both $d$ and $r$ are nonpositive which gives $\beta$ left-veering.
\end{proof}

Now the question becomes how does one tell if a reducible braid is right-veering if the trace is of the form $t^d+t^{-r}$ where $d,r$ are positive integers both divisible by $3$.   For example, if $\mathrm{tr}(\mathrm M_r(\beta))=t^6+t^{-6}$, it is impossible to determine if $k=2$ and $m=-12$ or if $k=-2$ and $m=12$ by means already stated.  Moreover, this distinction is necessary since the former implies $\beta$ is right-veering and the latter that $\beta$ is left-veering.  The following theorem gives us sufficient information to determine $k$ and $m$ assuming $\mathrm{tr}(\mathrm M_r(\beta))$  and the eigenvalue $\lambda$ for $\wh\beta$ are known.
 
\begin{theorem}Suppose that $\beta$ is a nontrivial reducible map.  Then value $|m|$ is the greatest common divisor of the entries off the diagonal of $\wh\beta$.  Moreover, for $\lambda=1$, the sign of $m$ is the sign of the ($1,2)$-entry of $\wh\beta$, and for $\lambda=-1$, the sign of $m$ is the sign of the $(2,1)$-entry of $\wh\beta$.\end{theorem}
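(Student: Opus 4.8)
The plan is to push the normal form for reducible braids through the homomorphism $\psi$ and read both invariants off a single explicit conjugation in $SL_2(\bZ)$. Write $\beta=\Delta^{2k}\tau\sigma_1^m\tau^{-1}$. Since $\mathrm M_r(\Delta^2)=\mathrm{diag}(t^3,t^3)$ specializes at $t=-1$ to $-I$, we have $\psi(\Delta^{2k})=(-1)^kI$, so $\wh\beta=(-1)^k\,\wh\tau\,\wh{\sigma_1}^{\,m}\,\wh\tau^{-1}$. Setting $S=\wh{\sigma_1}$, the specialization of $\mathrm M_r(\sigma_1)$ at $t=-1$ is the unipotent matrix $S=\left(\begin{smallmatrix}1&1\\0&1\end{smallmatrix}\right)$, whose $m$-th power is $\left(\begin{smallmatrix}1&m\\0&1\end{smallmatrix}\right)$. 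Because a conjugate of a unipotent matrix still has both eigenvalues equal to $1$, the eigenvalue of $\wh\beta$ is exactly $\lambda=(-1)^k$; thus $\lambda$ records the parity of $k$, and this is precisely the data that dictates which off-diagonal entry to inspect.

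First I would carry out the conjugation explicitly. Writing $P=\wh\tau=\left(\begin{smallmatrix}a&b\\c&d\end{smallmatrix}\right)$ with $ad-bc=1$, a direct multiplication gives
$$P\begin{pmatrix}1&m\\0&1\end{pmatrix}P^{-1}=\begin{pmatrix}1-amc & ma^2\\ -mc^2 & 1+mac\end{pmatrix},$$
so that $\wh\beta=(-1)^k\left(\begin{smallmatrix}1-amc & ma^2\\ -mc^2 & 1+mac\end{smallmatrix}\right)$. In particular the two off-diagonal entries of $\wh\beta$ are $(-1)^k ma^2$ and $(-1)^{k+1}mc^2$, whose absolute values are $|m|a^2$ and $|m|c^2$.

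The gcd statement then reduces to a number-theoretic observation. Since $ad-bc=1$, any common divisor of $a$ and $c$ divides $1$, so $\gcd(a,c)=1$ and hence $\gcd(a^2,c^2)=1$. Therefore $\gcd\!\big(|m|a^2,\,|m|c^2\big)=|m|$, which is exactly the claim that $|m|$ equals the gcd of the off-diagonal entries of $\wh\beta$. This part of the argument is uniform: it remains correct when one of $a,c$ vanishes (forcing the other to be $\pm1$) and when $m=0$.

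For the sign, the role of $\lambda$ becomes visible. When $\lambda=1$ (i.e. $k$ even) we have $\wh\beta=PS^mP^{-1}$, whose $(1,2)$-entry is $ma^2=m\cdot a^2$, a perfect-square multiple of $m$; since $a^2\ge 0$, the sign of this entry agrees with the sign of $m$. When $\lambda=-1$ (i.e. $k$ odd) the extra factor $-1$ flips every sign, and it is now the $(2,1)$-entry $(-1)(-mc^2)=mc^2$ that carries the sign of $m$. This explains the asymmetric choice of slot in the statement: in each parity one selects the off-diagonal entry that equals $m$ times a square. The step I expect to be the main obstacle is the degenerate case in which the nominated entry vanishes — $a=0$ when $\lambda=1$, or $c=0$ when $\lambda=-1$ — since then that entry yields no sign information; here one must either observe that the vanishing of one coordinate forces the other to be $\pm1$, so the complementary entry equals $\mp m$ and still pins down the sign after a controlled reversal, or argue that the conjugating braid $\tau$ can be chosen so the relevant entry is nonzero. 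Finally, combining $|m|$ with its sign recovers $m$, and substituting $m$ back into Proposition~\ref{trace} isolates the exponent $3k+m$ and hence $k$, completing the extraction of both invariants.
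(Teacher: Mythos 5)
Your proposal is correct and follows essentially the same route as the paper: write $\wh\beta=(-1)^k\wh\tau\left(\begin{smallmatrix}1&m\\0&1\end{smallmatrix}\right)\wh\tau^{-1}$, compute the conjugation explicitly to get off-diagonal entries $\pm ma^2$ and $\mp mc^2$, and use $\gcd(a,c)=1$ (hence $\gcd(a^2,c^2)=1$) for the gcd claim and the square factors for the sign claim. Your extra observation about the degenerate case $a=0$ (resp.\ $c=0$), where the nominated entry vanishes and carries no sign, is a genuine edge case the paper's proof silently skips, and your fix via the complementary entry is sound.
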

\begin{proof}
We first suppose that $\wh\beta$ has eigenvalue $\lambda=1$.  For any $\tau \in B_3$, it is easily checked that $\wh\beta$ is of the form $\begin{pmatrix} 1-amc & a^2m\\ -c^2m & 1+acm\end{pmatrix}$, where $\wh\tau=\begin{pmatrix}a&b\\ c&d\end{pmatrix}$.  Since $\wh\tau$ is in $SL_2(\bZ)$, we know that $gcd(a,c)=1$, and thus $gcd(a^2,c^2)=1$.  Therefore, the greatest common divisor of $a^2m$ and $-c^2m$ is $|m|$, and it is clear that the sign of $m$ determines the sign of $a^2m$.  
\\
Now if $\lambda=-1$, $\wh\beta$ is of the form $\begin{pmatrix}amc-1&-a^2m\\ c^2m&-1-acm\end{pmatrix}$.  The same argument holds except now the sign of $m$ determines the sign of $c^2m$.
\end{proof}

\subsection{The Periodic Case}

We use the fact that periodic maps of the torus have order dividing $12$, and start by stating a key lemma.  

\begin{lemma} \label{perlem}For any positive integer $n$, $h^n$ is right-veering if and only if $h$ is right-veering.\end{lemma}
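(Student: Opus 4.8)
The plan is to treat the two implications separately. The direction ``$h$ right-veering $\Rightarrow h^n$ right-veering'' requires nothing special: since $\mathrm{Veer}(\Sigma,\partial\Sigma)$ is a monoid under composition and $h\in\mathrm{Veer}$, the iterate $h^n$ is a composite of right-veering maps and hence again lies in $\mathrm{Veer}$. I would dispatch this half in a single line, noting that it uses no hypothesis on $h$ beyond its being right-veering.

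The content is the converse, ``$h^n$ right-veering $\Rightarrow h$ right-veering,'' which I would prove in contrapositive form: assuming $h$ is not right-veering, I would show the same of $h^n$. First observe that $h\neq\mathrm{id}$, since the identity is right-veering. The crucial point is a dichotomy for \emph{periodic} classes: a nontrivial periodic map veers strictly to one side. Geometrically, after capping $\partial S$ a periodic class is isotopic to a rigid finite-order rotation, so near the boundary it carries every arc consistently clockwise or counterclockwise; there is no ``neither'' behaviour of the sort available to reducible or pseudo-Anosov classes. Hence a nontrivial periodic $h$ is either right-veering or left-veering, and since by hypothesis it is not the former, it is left-veering.

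Next I would push this through powers. By the analogous monoid property for the opposite orientation, the left-veering maps also form a monoid, so $h^n$ is left-veering. Finally I rule out $h^n$ being right-veering: a map that is simultaneously right- and left-veering must fix every properly embedded arc up to isotopy, since the two tangent-orientation conditions are incompatible unless the image arc is isotopic to the original; such a map is the identity. Thus if $h^n$ were right-veering it would equal $\mathrm{id}$, but $\mathrm{Mod}(S,\partial S)\cong B_3$ is torsion-free, forcing $h=\mathrm{id}$ and contradicting our assumption. Therefore $h^n$ is not right-veering, completing the contrapositive.

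I expect the main obstacle to be the dichotomy for periodic maps---that a nontrivial periodic class is genuinely right- or left-veering, never ``neither.'' This is precisely the step where periodicity, rather than merely the raw statement of the lemma, is used, and the cleanest justification I know routes through the fractional Dehn twist coefficient $c(h)$: it is homogeneous, $c(h^n)=n\,c(h)$, it is a nonzero rational for a nontrivial periodic class, and its sign records whether $h$ is right- or left-veering. In fact this invariant yields an alternative, more conceptual proof of the whole lemma at once, since $h$ right-veering $\iff c(h)>0\iff c(h^n)=n\,c(h)>0\iff h^n$ right-veering. The remaining ingredients---the monoid property, the ``both-veerings-imply-identity'' fact, and torsion-freeness of $B_3$---are standard and carry no hidden difficulty.
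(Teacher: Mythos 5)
Your forward direction coincides with the paper's (the monoid property of $\mathrm{Veer}$), but your converse takes a genuinely different route, and as written it proves strictly less than the lemma asserts. The lemma places no hypothesis on $h$: it is stated (and used later only) in the periodic setting, but the statement itself is about an arbitrary mapping class, and the paper's proof is correspondingly general and elementary. The paper argues: if $h$ is not right-veering, then by the definition of ``to the right'' (for arcs meeting minimally and transversely at a boundary point, not-to-the-right forces strictly-to-the-left) there is a single arc $\gamma$ with $h(\gamma)$ strictly to the left of $\gamma$; since a boundary-fixing homeomorphism preserves the left/right order on isotopy classes of arcs, iterating gives $h^n(\gamma)$ strictly to the left of $\gamma$, contradicting $h^n\in\mathrm{Veer}$. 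No trichotomy of Thurston type, no dichotomy for periodic maps, no torsion-freeness of $B_3$ is needed.

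Your argument, by contrast, hinges on the dichotomy ``a nontrivial periodic class is either right-veering or left-veering,'' which genuinely uses periodicity (and, for the strictness, torsion-freeness of $\mathrm{Mod}(S,\partial S)\cong B_3$); for a general $h$ that is not right-veering---say a pseudo-Anosov with small fractional Dehn twist coefficient---there is no such dichotomy, and your contrapositive does not get off the ground. So relative to the lemma as stated there is a real gap, even though your proof suffices for the only place the lemma is invoked (the corollary about periodic $\beta$). Two smaller points: the justification of the dichotomy via ``rigid rotation after capping'' is the one step that needs care (it is where the fractional Dehn twist coefficient, or an equivalent, must enter), and your proposed one-line proof via $c(h)>0$ is off at the boundary case, since the identity is right-veering with $c=0$ and for periodic maps the correct equivalence is with $c\geq 0$. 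If you instead run the paper's single-arc iteration argument, all of these inputs become unnecessary and the lemma holds in full generality.
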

\begin{proof}
It is clear that if $h$ is right-veering, then $h^n$ is right-veering since $\mathrm{Veer}$ is a monoid.  Now assume that $h^n(\alpha)$ is to the right of $\alpha$ for all properly defined arcs $\alpha$.  Then either $h(\alpha)$ is to the right of $\alpha$ for all relevant $\alpha$, or there is some arc $\gamma$ with $h(\gamma)$ to the left of $\gamma$.  However, the latter case implies that $h^n(\gamma)$ is to the left of $\gamma$.  This is a contradiction.
\end{proof}  

We have the following theorem which gives necessary and sufficient conditions for $\beta^{12}$ to be right-veering.

\begin{theorem}\label{perthm}Let $\beta\in B_3$ be a nontrivial periodic map.  Then $\mathrm M_r(\beta^{12})=\begin{pmatrix}t^{6k} & 0\\ 0 & t^{6k}\end{pmatrix}$ for some integer $k$.  Moreover, $\beta^{12}$ is right-veering if and only if $k\geq0$.\end{theorem}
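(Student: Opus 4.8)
The plan is to identify $\beta^{12}$ exactly as a central element, i.e.\ as an explicit power of $\Delta^2$, and then read off both the reduced Burau matrix and the veering from that normal form. First I would use the Thurston input already invoked in the text: since $\beta$ is periodic, $\widehat\beta=\psi(\beta)$ satisfies $|\mathrm{tr}(\widehat\beta)|<2$, so $\widehat\beta$ is an elliptic, hence finite-order, element of $SL_2(\bZ)$. Its order therefore lies in $\{3,4,6\}$ and in particular divides $12$, so $\widehat{\beta^{12}}=\widehat\beta^{\,12}=I$. Consequently $\beta^{12}$ lies in $\ker\psi=\langle\Delta^2\rangle$, and we may write $\beta^{12}=\Delta^{2j}$ for some integer $j$.

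The crucial refinement is that $j$ must be \emph{even}, which is exactly what upgrades the naive exponent $t^{3j}$ to the stated $t^{6k}$. To see this I would bring in the exponent-sum homomorphism $e\colon B_3\to\bZ$ defined by $\sigma_i\mapsto 1$; it is well defined because both braid relations (1) and (2) preserve total exponent. Since $e(\Delta)=e(\sigma_1\sigma_2\sigma_1)=3$ we have $e(\Delta^{2j})=6j$, while $e(\beta^{12})=12\,e(\beta)$. Equating the two expressions gives $j=2\,e(\beta)$, so $j$ is even; set $k=e(\beta)$. Then $\beta^{12}=\Delta^{4k}=(\Delta^2)^{2k}$, and since the proof of Proposition~\ref{trace} records $\mathrm M_r(\Delta^{2m})=\begin{pmatrix}t^{3m}&0\\0&t^{3m}\end{pmatrix}$, taking $m=2k$ yields $\mathrm M_r(\beta^{12})=\begin{pmatrix}t^{6k}&0\\0&t^{6k}\end{pmatrix}$, as claimed.

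For the veering statement I would exploit that $\beta^{12}=\Delta^{4k}=(\Delta^2)^{2k}$ is central, hence a pure power of the boundary-parallel Dehn twist $T_a=\Delta^2$. Writing it in the reducible normal form $T_a^{2k}T_b^{\,0}$ with $b$ nonseparating, Lemma~\ref{hkm} applies directly: such a map is right-veering iff $2k>0$, or $2k=0$ together with $0\geq 0$ — that is, iff $k\geq 0$. (One can also see this without the lemma: for $k\geq 0$, $\beta^{12}$ is a nonnegative power of a positive Dehn twist and so lies in the monoid $\mathrm{Veer}$, whereas for $k<0$ it is a nontrivial left-veering map and hence not right-veering; Lemma~\ref{perlem} is not needed here since we analyze $\beta^{12}$ itself.)

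The hard part is really only the parity of $j$. The Burau side gives nothing finer than $\mathrm M_r(\Delta^{2j})=\begin{pmatrix}t^{3j}&0\\0&t^{3j}\end{pmatrix}$, so the theorem's $t^{6k}$ form is equivalent to the assertion that the $12$th power of every periodic $3$-braid is an \emph{even} power of $\Delta^2$. The appeal to $e$ makes this transparent and, importantly, avoids having to enumerate the periodic conjugacy classes (conjugates of powers of $\sigma_1\sigma_2$ and of $\Delta$) and track their central parts by hand; that bookkeeping is the only place I would expect a slip, and the homomorphism $e$ sidesteps it entirely. The finite-order input and the application of Lemma~\ref{hkm} are then routine.
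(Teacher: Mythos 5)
Your proof is correct and its overall skeleton matches the paper's: use periodicity to force $\widehat\beta$ to have finite order dividing $12$, conclude $\beta^{12}$ is central, and then read the matrix and the veering off the resulting power of the boundary twist. The one substantive difference is how you obtain the crucial parity that turns $t^{3j}$ into $t^{6k}$. The paper gets it for free by asserting that $\ker\psi$ is generated by $\Delta^4$ (which is correct, since $\Delta^2\mapsto -I$ in $SL_2(\bZ)$), so that $\beta^{12}=\Delta^{4k}$ immediately. You instead write $\ker\psi=\langle\Delta^2\rangle$ --- this identification is actually wrong for the map onto $SL_2(\bZ)$, precisely because $\Delta^2\mapsto -I\neq I$ --- but the error is harmless: you only use the containment $\beta^{12}\in\langle\Delta^2\rangle$, which holds, and you then recover the needed evenness independently via the exponent-sum homomorphism $e$, getting $6j=12\,e(\beta)$ and hence $j=2e(\beta)$. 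That derivation is valid, sidesteps any appeal to the structure of $\ker\psi$, and has the bonus of identifying $k=e(\beta)$ explicitly, which the paper does not do. Your treatment of the veering claim via Lemma~\ref{hkm} applied to the normal form $T_a^{2k}T_b^{0}$ is a slightly more formal version of the paper's direct observation that a boundary twist is right-veering iff its power is nonnegative; both are fine, and you are right that Lemma~\ref{perlem} is only needed for the subsequent corollary, not for the theorem itself.
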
 
\begin{proof}
If $\beta$ is a periodic braid, then $\wh\beta$ is a periodic element of $SL_2(\bZ)$.  Since all periodic maps of $T^2$ have order dividing 12, $\beta^{12}$ must be an element of $ker(\psi)$ which is generated by $\Delta^4$.  We know that $\Delta^{2k}$ are powers of Dehn twists about the boundary of $D_3$, where this twist is positive if and only if $k$ is positive.  Since $\mathrm M_r(\Delta^{2k})=\begin{pmatrix}t^{3k}&0\\0&t^{3k}\end{pmatrix}$, we have that $\mathrm M_r(\beta^{12})=\begin{pmatrix}t^{6k}&0\\ 0&t^{6k}\end{pmatrix}$.  Moreover, this element is a positive Dehn twist if and only if $k>0$ and the identity element if and only if $k=0$.\end{proof}
 
\begin{corollary}Let $\beta$ be a periodic element of $B_3$.  Then $\beta$ is right-veering if and only if $\mathrm M_r(\beta^{12})$ has trace equal to $2t^s$ for some nonnegative integer $s$.\end{corollary}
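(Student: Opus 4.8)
The plan is to combine the two preceding results directly, since nearly all of the work has already been done. First I would apply Lemma~\ref{perlem} with $n=12$: this shows that $\beta$ is right-veering if and only if $\beta^{12}$ is right-veering, so the entire criterion reduces to a statement about the twelfth power alone. Then, because $\beta$ is periodic, Theorem~\ref{perthm} tells us that $\mathrm M_r(\beta^{12})=\begin{pmatrix} t^{6k} & 0 \\ 0 & t^{6k}\end{pmatrix}$ for some integer $k$, and that $\beta^{12}$ is right-veering precisely when $k\geq 0$.

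The remaining step is purely bookkeeping. Reading off the trace gives $\mathrm{tr}(\mathrm M_r(\beta^{12}))=2t^{6k}$, so this trace has the form $2t^{s}$ for a \emph{nonnegative} integer $s$ if and only if $6k\geq 0$, that is, if and only if $k\geq 0$; here I use that $t$ is a formal variable, so the equation $2t^{6k}=2t^{s}$ forces $s=6k$. Chaining these equivalences yields the claim: $\beta$ is right-veering $\iff$ $\beta^{12}$ is right-veering $\iff$ $k\geq 0$ $\iff$ the trace of $\mathrm M_r(\beta^{12})$ equals $2t^{s}$ with $s\geq 0$. The key observation worth stating explicitly is the identification $s=6k$, which is exactly what converts the sign condition $k\geq 0$ of Theorem~\ref{perthm} into the trace condition of the corollary.

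The only caveat I would flag is that Theorem~\ref{perthm} is stated for a \emph{nontrivial} periodic map, while the corollary permits $\beta$ to be trivial. This case is immediate and can be disposed of in a line: the identity is right-veering (every arc is fixed, hence isotopic to its image), and $\mathrm M_r(\beta^{12})$ is then the identity matrix with trace $2=2t^{0}$, so $s=0$ works and the statement holds. Since each step is an immediate consequence of the cited results, I do not anticipate a genuine obstacle; the proof is essentially a two-line chain of equivalences once the exponent identification $s=6k$ is made.
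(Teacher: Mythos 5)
Your proof is correct and follows exactly the paper's route: the paper's own proof is simply ``immediate from Lemma~\ref{perlem} and Theorem~\ref{perthm},'' and your chain of equivalences (including the identification $s=6k$ and the disposal of the trivial case) just makes that explicit.
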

\begin{proof}The proof is immediate from the Lemma~\ref{perlem} and Theorem~\ref{perthm}.\end{proof}


\section{The General Case}\label{general}

In this section we use the unreduced matrix $\mathrm M(\beta)$ of a $3$-braid $\beta$ to determine its veering information.  In short, given $\mathrm M(\beta)$, we use its entries to calculate geometric intersection numbers of $\beta(y_i)$ and arcs $\alpha_j$ for $1\leq i,j\leq 3$ shown below in Figure~\ref{alphas}.  This allows us to determine the action of $\beta$ on each $y_i$ up to powers of $\Delta^2$.  We can then write down a matrix $\mathrm M(\beta')$ that represents this action using the method described in Section~\ref{geometric}.  The matrix $\mathrm M(\Delta^{2k})\mathrm M(\beta')$ will be $\mathrm M(\beta)$, for exactly one integer $k$ because of the faithfulness of the representation for $n=3$.  As long as $\mathrm M(\beta')$ is well-chosen, the sign of $k$ will tell us the veering information.  We begin with the following theorem.     
\begin{figure}[htbp]
\begin{center}
\includegraphics[width=1in]{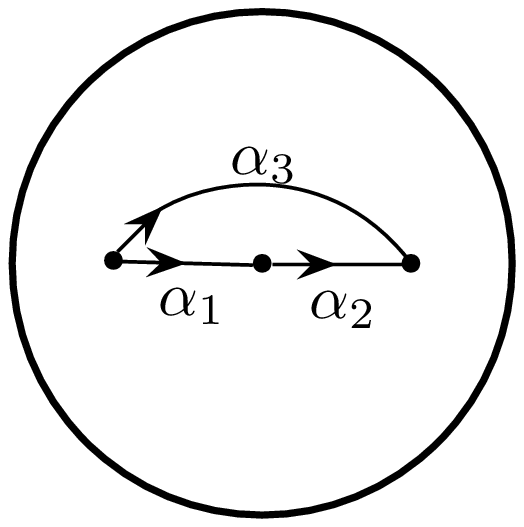}
\caption{}
\label{alphas}
\end{center}
\end{figure}

\begin{theorem}\label{pi_1} Let $\beta\in B_n$.  Then $\beta$ is right-veering if and only if $\beta$ sends every loop representing an element of $\pi_1(D_n,p_0)$ to the right.\end{theorem}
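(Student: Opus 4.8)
The plan is to prove the two implications separately, passing between properly embedded arcs and based loops by a capping construction and reading the relation ``to the right'' off the universal cover. Throughout I fix a representative homeomorphism of $\beta$ that is the identity on a collar neighborhood of $\partial D_n$. A first reduction simplifies the definition of right-veering: since $\partial D_n$ is connected and $\beta$ fixes it pointwise, an arc based at an arbitrary boundary point can be dragged along $\partial D_n$ to an arc based at $p_0$, and this dragging is compatible with $\beta$; hence $\beta$ is right-veering if and only if $\beta(\alpha)$ is to the right of $\alpha$ for every properly embedded arc $\alpha$ based at $p_0$. This puts right-veering and the loop condition on the same footing, both being statements about objects based at $p_0$.

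I would then work in the universal cover $\pi:\tilde\Sigma\to D_n$ with the lift $\tilde p_0$, using the covering-space description of ``to the right'' from the introduction. Both a properly embedded arc and a based loop at $p_0$ lift, from $\tilde p_0$, to an arc ending on $\partial\tilde\Sigma$, and the relation ``$\beta(\cdot)$ is to the right of $(\cdot)$'' is detected by the position of the terminal point of the image lift relative to the simple separating arc $\tilde\alpha$. The key identity is that the chosen lift $\tilde\beta$, which fixes $\tilde p_0$, satisfies $\tilde\beta(g\cdot\tilde p_0)=\beta_\ast(g)\cdot\tilde p_0$ for every $g\in\pi_1(D_n,p_0)$, where $\beta_\ast$ is the induced automorphism. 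Thus for a loop $\gamma$ with class $g=[\gamma]$, the terminal point of $\widetilde{\beta(\gamma)}$ is $\beta_\ast(g)\cdot\tilde p_0$, and $\beta$ sends $\gamma$ to the right precisely when $\tilde\beta$ moves the boundary point $g\cdot\tilde p_0$ weakly to the right. Since right-veering forces $\tilde\beta$ to move every point of $\partial\tilde\Sigma$ weakly to the right, it moves the lifts of $p_0$ to the right in particular, giving the implication that right-veering maps send every loop to the right.

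For the converse I would argue by capping off. Given a properly embedded arc $\alpha$ from $p_0$ to $x'\in\partial D_n$, append an arc from $x'$ back to $p_0$ inside the fixed collar to obtain a based loop $\hat\alpha$; because $\beta$ is the identity on the collar, $\beta(\hat\alpha)$ is $\beta(\alpha)$ followed by the same collar arc, so the terminal points of the lifts of $\hat\alpha$ and $\beta(\hat\alpha)$ sit in the same positions relative to each other as those of $\alpha$ and $\beta(\alpha)$. Hence ``$\beta(\hat\alpha)$ to the right of $\hat\alpha$'', which holds by hypothesis since $\hat\alpha$ represents an element of $\pi_1(D_n,p_0)$, yields ``$\beta(\alpha)$ to the right of $\alpha$''; by the basepoint reduction this is exactly right-veering.

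The main obstacle is the careful bookkeeping underlying the two geometric reductions. I must verify that dragging basepoints along $\partial D_n$ genuinely preserves the to-the-right relation, and that capping off inside the collar does not alter the relative position of the terminal points of the lifts after the arcs are isotoped into minimal position near $p_0$; the collar-fixing choice of representative is what should make both checks routine. The more delicate point is the step ``right-veering forces $\tilde\beta$ to move every boundary point to the right'': this requires knowing that the terminal points coming from properly embedded arcs at $p_0$ are dense enough in $\partial\tilde\Sigma$, in its circular order, that weak rightward motion on this dense set propagates to all of $\partial\tilde\Sigma$ by monotonicity of the orientation-preserving action of $\tilde\beta$ on the boundary. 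Establishing that density, and thereby controlling all loops (including non-simple ones) at once, is where I expect the real work to lie.
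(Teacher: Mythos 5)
Your proposal matches the paper's proof in its essential mechanism: the substantive direction (loops sent to the right $\Rightarrow$ right-veering) is handled by the same capping construction --- complete a properly embedded arc based at $p_0$ to a based loop by appending an arc along (or in a collar of) $\partial D_n$, lift to the universal cover, and transfer the rightward position of the image loop to the image arc, using that the lifts of the boundary arc are disjoint. The only divergence is in the converse direction, which the paper dismisses as immediate from the definition of $\mathrm{Veer}$, whereas you correctly observe that it rests on the weak monotonicity of the induced action of $\tilde\beta$ on $\partial\tilde\Sigma$ (the standard Honda--Kazez--Mati\'c characterization, needed because loops representing elements of $\pi_1(D_n,p_0)$ need not be simple); this extra care is harmless and, if anything, fills in a step the paper leaves implicit.
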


\begin{proof}Assume $\beta$ sends every loop representing an element of the fundamental group of $D_n$ to the right.  We endow $D_n$ with a hyperbolic metric for which $\partial D_n$ is geodesic.  The universal cover $\pi:\tilde X\rightarrow D_n$ is a subset of the Poincare disk $D^2$.  Consider the closure of $\tilde X$ in $D^2$, and denote one component of $\partial\tilde X$ as $L$.  Without any loss of generality, we let $\gamma$ be an oriented arc in $D_n$ with endpoints $p_0=\gamma(0)$ and $x_1=\gamma(1)$ both in $\partial D_n$.  We also assume that $\gamma$ is essential in the sense that it does not cobound a digon with some subarc of $\partial D_n$.  Then $\gamma\cup\alpha$ is a nontrivial loop based at $p_0$, for $\alpha$ an embedded arc in $\partial D_n$ oriented clockwise from $x_1$ to $p_0$.  Choose $\tilde p_0 \in \pi^{-1}(p_0)$ in $L$, and let $\tilde \beta:\tilde X\rightarrow\tilde X$ be a lift of $\beta$ which fixes $L$ pointwise.  

Choose a lift $\widetilde{\gamma\cup\alpha}=\tilde\gamma\cup\tilde\alpha$ of $\gamma\cup\alpha$ which has initial point $\tilde p_0$ and terminal point $\tilde y_0=g(\tilde p_0)$, for $g\in \pi_1(D_n,p_0)$.  Without loss of generality, we assume that $\tilde \gamma$ is geodesic.  Note that $\tilde\gamma(1)=\tilde x_1=\tilde\alpha(0)$.  
By hypothesis $\tilde \beta(\tilde \gamma\cup\tilde\alpha)$ is in the region to the right of $\tilde\gamma\cup\tilde\alpha$.  Because $\pi^{-1}(\alpha)$ is a collection of disjoint arcs in $\partial \tilde X$, we can conclude that $\tilde \beta(\tilde\alpha)\cap\tilde\alpha=\emptyset$.  Therefore, $\tilde\beta(\tilde\alpha)$ is in the region to the right of $\tilde\alpha$ and hence, $\tilde\beta(\tilde\gamma)$ (having terminal point in $\tilde\beta(\tilde\alpha)$) is in the region to the right of $\tilde\gamma$.  The converse implication is clearly true by definition of $\mathrm{Veer}$.
\end{proof}

Given Theorem ~\ref{pi_1}, our goal is to detect rightward movement of generators $y_i$ of the fundamental group of $D_n$.  
\subsection{Proof of Theorem~\ref{main}}

Let $\beta$ be a 3-braid and suppose that $C_i=(p_{1,i}(t),p_{2,i}(t),p_{3,i}(t))$ is the $i$-th column $\mathrm M(\beta)$.  Recall that $p_{j,i}(t)=\langle\delta_j,\beta(y_i)\rangle$.  Define the following Laurent polynomials: $$Q_i(t)=p_{1,i}(t)-tp_{2,i}(t),\\ R_i(t)=p_{2,i}(t)-tp_{3,i}(t), \\ S_i(t)=p_{1,i}-t^2p_{3,i}(t).$$  

\noindent We claim that  $Q_i, R_i,S_i$ are intersection pairings of $\beta(y_i)$ and $\alpha_1, \alpha_2,\alpha_3$, respectively.  
To see this, we first note that $$Q_i(t)=\langle \delta_1,\beta(y_i)\rangle-t\langle \delta_2,\beta(y_i)\rangle=\langle\delta_1-t^{-1}\delta_2,\beta(y_i)\rangle=\sum_{k\in\bZ}(t^k(\tilde\delta_1-t^{-1}\tilde\delta_2),\widetilde{\beta(y_i)})t^k.$$  If  $\tilde\alpha_1$ is a lift of $\alpha_1$ to the $0$-th level of $\tilde D_n$, then $\tilde \delta_1-t^{-1}\tilde\delta_2$ and $\tilde \alpha_1$ represent the same element of $H_1(\tilde D_n, \widetilde{\partial D_n^+}\cup\tilde Q)$.  Scaling by $t^{-1}$ is necessary because pushing $\tilde \delta_2$ off to the left results in a level change upwards.  Similarly, $\tilde \delta_2-t^{-1}\tilde\delta_3$ and $\tilde \alpha_2$ represent the same element of $H_1(\tilde D_n, \widetilde{\partial D_n^+}\cup\tilde Q)$.  Finally, if we say that $\tilde \alpha_3$ is the lift of $\alpha_3$ that begins on the $0$-th level of $\tilde D_n$ (and, hence, ends on the $-1$st level), then it is clear that $\tilde \alpha_3=\tilde \alpha_1+t^{-1}\tilde\alpha_2$ as homology classes.  We have $$\tilde\alpha_3=(\tilde\delta_1-t^{-1}\tilde\delta_2)+t^{-1}(\tilde\delta_2-t^{-1}\tilde\delta_3)=\tilde\delta_1-t^{-2}\tilde\delta_3 \in H_1(\tilde D_n, \widetilde{\partial D_n^+}\cup\tilde Q).$$ Thus, our formula for $S_i(t)$ gives us an intersection pairing of $\alpha_3$ and $\beta(y_i)$.

It remains to show that there is no cancellation of terms in this pairing, and hence we are able to calculate the geometric intersection numbers of the aforementioned arcs by specializing each polynomial at $t=-1$.  Let $w_r$ be the $r$-th intersection of $\beta(y_i)$ and $\alpha_1$, and consider the loop that is the union of the arcs from $w_r$ to $w_{r+1}$ along $\alpha_1$ and along $\beta(y_i)$.  If $\beta(y_i)$ intersects $\alpha_1$ in the same direction at both $w_r$ and $w_{r+1}$, then this loop must contain exactly two of the punctures (this is a direct consequence of there being three punctures).  Hence, if at $w_r$ the lift $\widetilde{\beta(y_i)}$ is on level $k$ of $\widetilde D_3$, then at $w_{j+1}$ $\widetilde{\beta(y_i)}$ is on level $k\pm2$ of $\widetilde D_3$.  The arc $\beta(y_i)$ will intersect $\delta_1$ or $\delta_2$ either just before or just after each intersection with $\alpha_1$ depending upon its orientation at $w_r$ and $w_{r+1}$.  Let's suppose $\beta(y_i)$ is oriented upward at $w_r$ and $w_{r+1}$.  Then there are three options for the diagram near $w_r$ and $w_{r+1}$ which are shown in Figure~\ref{choices1}.  The labeling in this figure reflects the monomial that is recorded in the row corresponding to either $\delta_1$ or $\delta_2$.  
\begin{figure}[htbp]
\begin{center}
\subfigure[]{\label{samecase1}\includegraphics[height=1in]{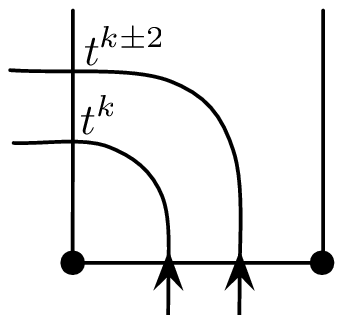}} 
\subfigure[]{\label{samecase3}\includegraphics[height=1in]{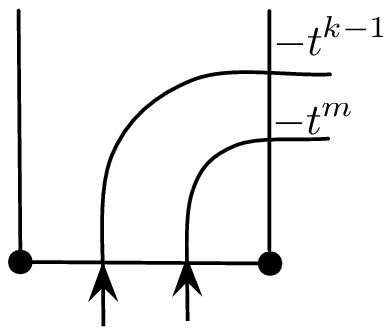}} 
\subfigure[]{\label{samecase2}\includegraphics[height=1in]{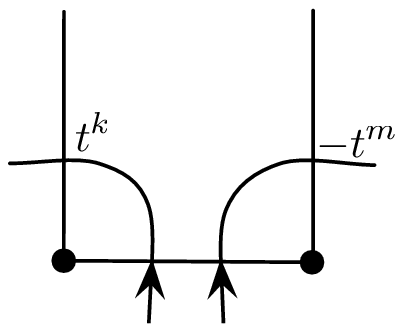}}
\end{center} \caption{$m$ is either $k-3$ or $k+1$.}
\label{choices1}
\end{figure} 
In all three instances, when we consider the monomial associated to $\delta_1$ and $-t$ times the monomial associated to $\delta_2$, there is a consistency between the parity of the exponent of $t$ and the sign of $t$ (meaning same parity, same sign and opposite parity, opposite sign).  As a result, specialization at $t=-1$ gives the accurate geometric intersection number of $2$ in absolute value.  A similar argument holds if $\beta(y_i)$ is oriented downward at $w_r$ and $w_{r+1}$.  

If $\beta(y_i)$ is oriented oppositely at $w_r$ and $w_{r+1}$, then the loop described above must either contain exactly one puncture or possibly all three.  Hence, if at $w_r$, the lift $\widetilde{\beta(y_i)}$ is on level $k$, then at $w_{r+1}$ it is either on level $k\pm 1$ or $k\pm 3$.  The local picture looks like one of the three options shown in Figure~\ref{choices2}.  Again, we see consistency between parity of the exponent and the sign of $t$ in the monomial associated to $\delta_1$ and $-t$ times the monomial associated to $\delta_2$.  So, indeed, $|Q_i(-1)|$ is the correct geometric intersection number.  The same argument holds for $|R_i(-1)|$.
\begin{figure}[htbp]
\begin{center}
\subfigure[]{\label{differentcase1}\includegraphics[height=1in]{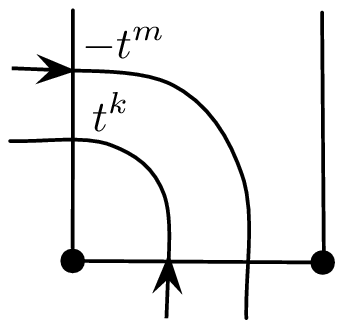}} 
\subfigure[]{\label{differentcase3}\includegraphics[height=1in]{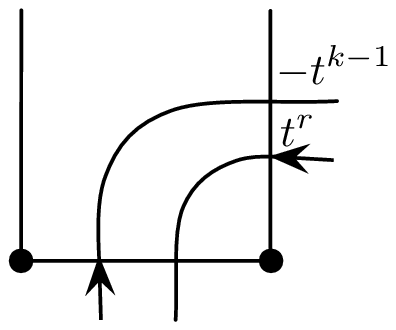}} 
\subfigure[]{\label{differentcase2}\includegraphics[height=1in]{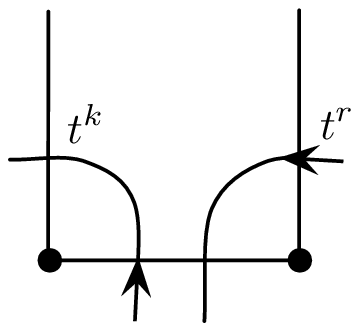}}
\end{center} \caption{$m$ is either $k\pm1$ or $k\pm3$ and $r$ is either $k,k-4$, or $k\pm2$.}
\label{choices2}
\end{figure} 

Lastly, the polynomials in the rows corresponding to $\delta_1$ and $\delta_3$ are inconsistent in the following sense:  if the terms in the polynomial corresponding to $\delta_1$ having even (odd) exponents are positive (negative), then the terms in the polynomial corresponding to $\delta_3$ having even (odd) exponents are negative (positive).  The only exceptions are the terms corresponding to intersections of the types seen below in Figure~\ref{exception}.  However, multiplying the polynomial corresponding to $\delta_3$ by $-t^2$ changes the any inconsistency to a consistency with the polynomial corresponding to $\delta_1$.  Furthermore, summing with the polynomial corresponding to $\delta_1$ cancels the terms not giving an intersection with $\alpha_3$ as in Figure~\ref{exception}. Thus $|S_i(-1)|$ gives the geometric intersection number of $\beta(y_i)$ and $\alpha_3$.  
\begin{figure}[htbp]
\begin{center}
\includegraphics[height=1in]{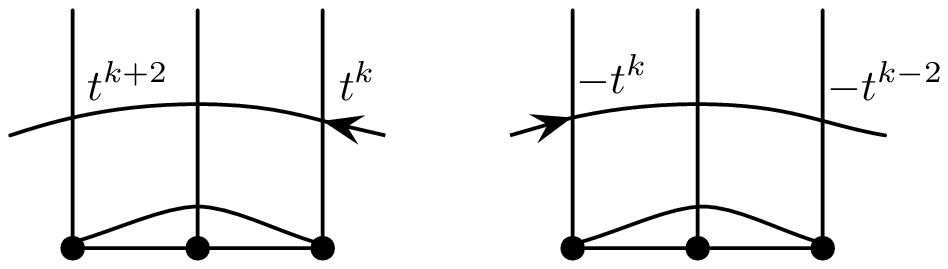}
\caption{}
\label{exception}
\end{center}
\end{figure}

Now, given these intersection numbers we argue that we can determine the action of $\beta$ on each $y_i$ up to full Dehn twists about the boundary.

\subsection{Proof of Theorem~\ref{same}}

Let $\beta\in B_3$.  Since $\beta$ is uniquely determined by its action on the loops $y_1,y_2,y_3$, we argue that the image of each $y_i$ under $\beta$ can be determined up to the action of the braid $\Delta^{2k}$ and isotopy given  the geometric intersection numbers $i(\beta(y_i),\alpha_1)=a_i$, $i(\beta(y_i),\alpha_2)=b_i$, and $i(\beta(y_i),\alpha_3)=c_i$.  Without any loss of generality, we consider the image $\beta(y_1)$.  The arc $\beta(y_1)$ is the union of subarcs inside and outside the triangle formed by $\alpha_1,\alpha_2,\alpha_3$.  Two arcs of particular interest are the initial subarc and the terminal subarc which intersect the boundary of the disk (shown in red).  

Conceivably, the inside and outside of the triangle looks like the two train track diagrams pictured below in Figure~\ref{inandout}.  
\begin{figure}[htbp]
\begin{center}
\subfigure[Inside the triangle]{\label{triangle1}\includegraphics[width=1in]{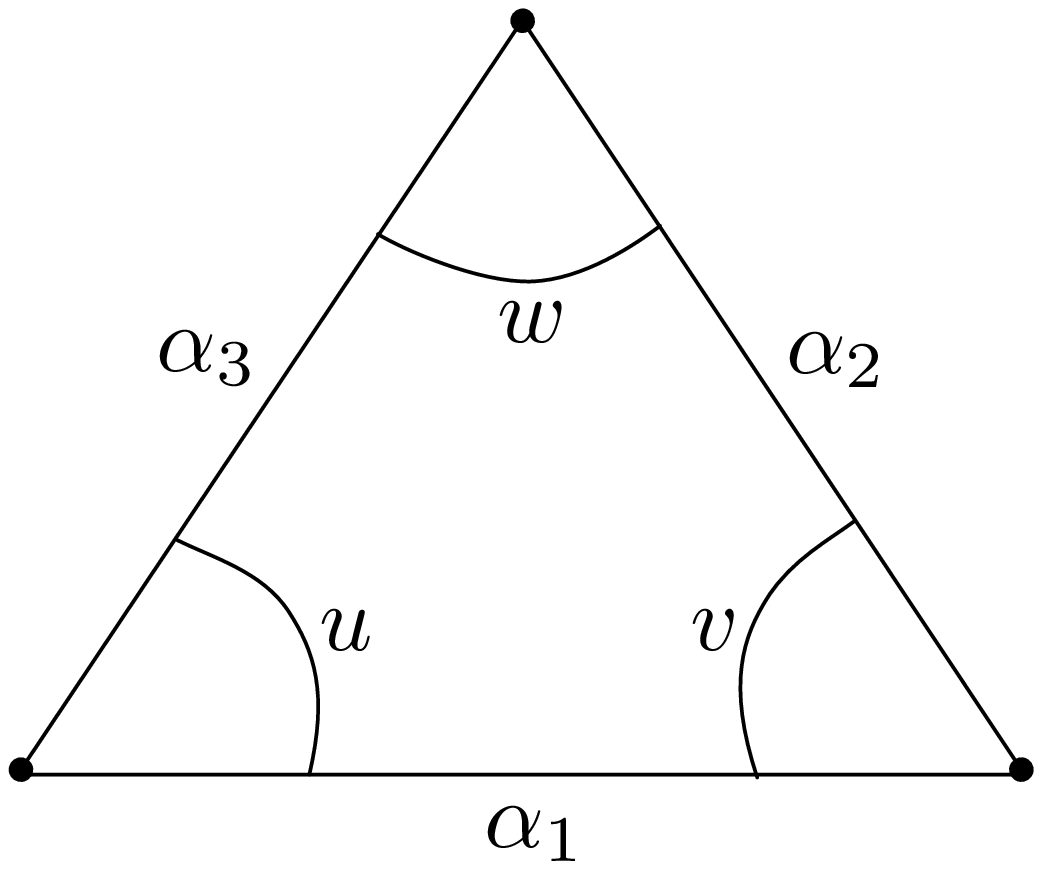}} 
\subfigure[Outside the triangle]{\label{triangle2}\includegraphics[width=1in]{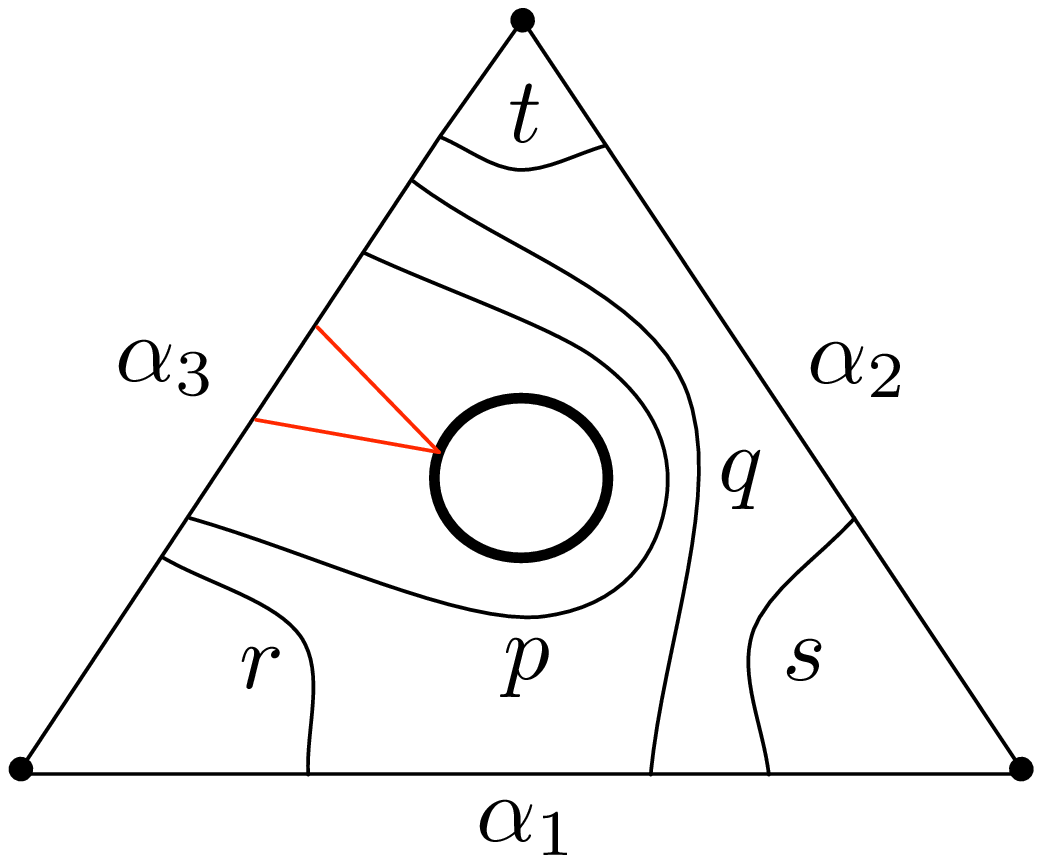}} 
\end{center} \caption{}
\label{inandout}
\end{figure} 

However, we see that one of $u,v,w$ must be zero, for if all of these are positive, then all of $r,s,t$ must be zero (else, we would have a loop).  This would imply that one of the sides of the triangle has no intersection points contradicting $u,v,w$ all positive.  Hence, one of $u,v,w$ must be zero or equivalently, two of $a_1,b_1,c_1$ sum to give the third.  Without any loss of generality, let's say $a_1+b_1=c_1$.  With this assumption, we can conclude that the inside of the triangle is one of the two options shown in Figure~\ref{inside}.  
\begin{figure}[htbp]
\begin{center}
\subfigure[]{\label{inside1}\includegraphics[width=1in]{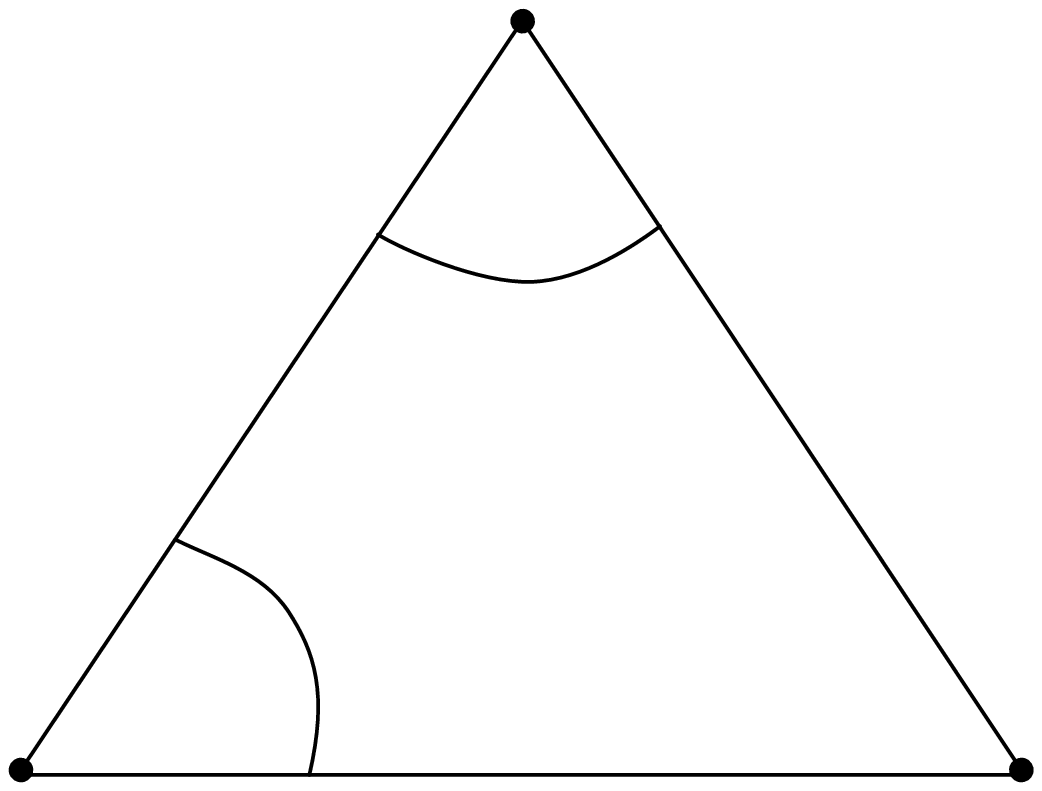}} 
\subfigure[]{\label{inside3}\includegraphics[width=1in]{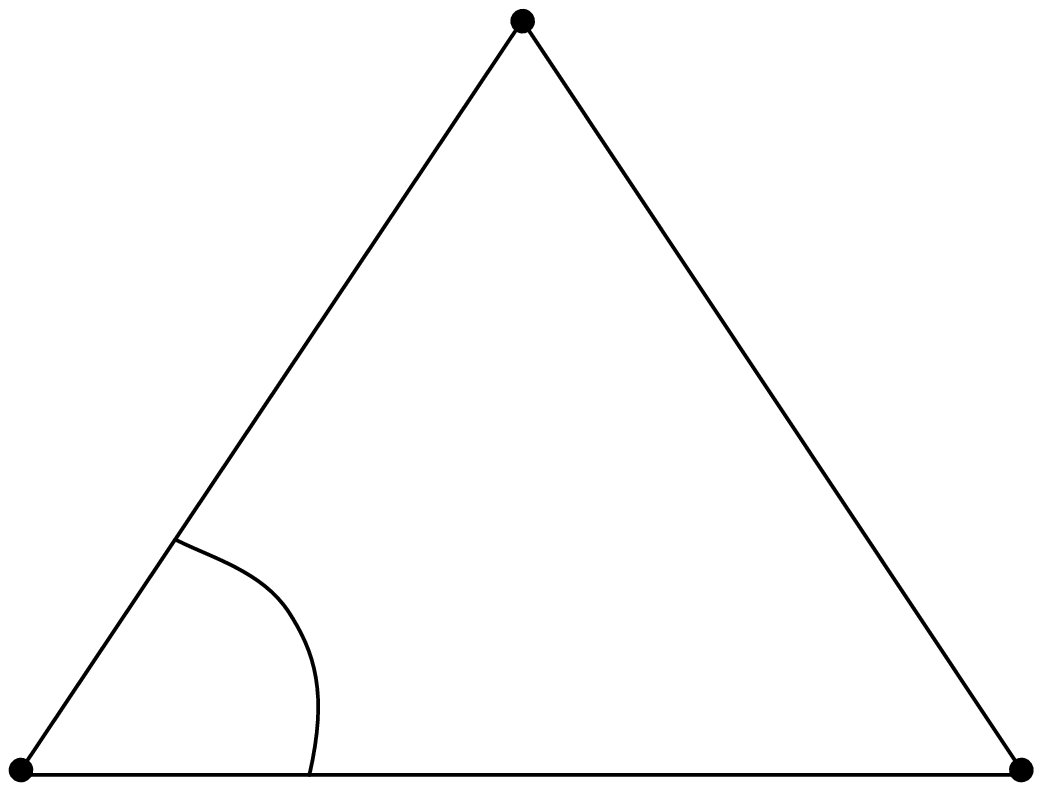}}  
\end{center} \caption{}
\label{inside}
\end{figure} 

It can be easily checked that the configuration in Figure~\ref{inside3} is the special case that $a_1=1, b_1=0, c_1=1$, and hence $\beta$ acts as the identity on $y_1$ up to multiplication by $\Delta^{2k}$.  Assuming that $a_1,b_1,c_1$ are all positive as in Figure~\ref{inside1}, we have the following six choices for the outside of the triangle shown in Figure~\ref{outside}.

\begin{figure}[htbp]
\begin{center}
\subfigure[]{\label{outside1a}\includegraphics[width=1in]{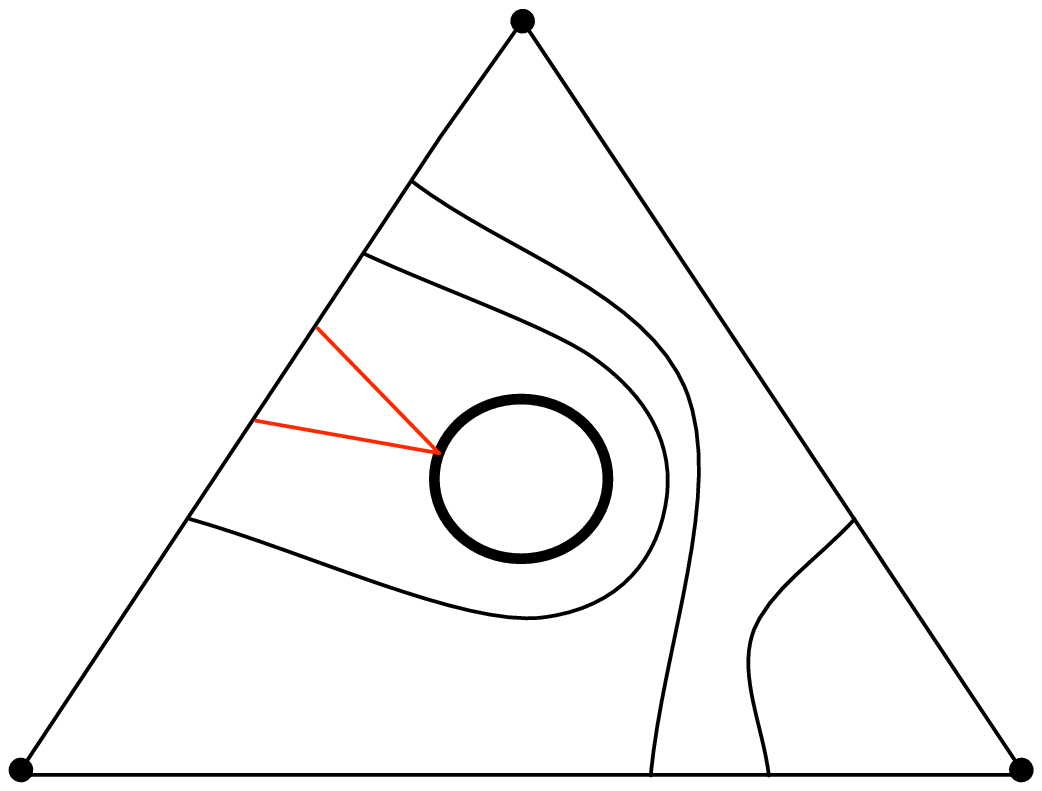}} 
\subfigure[]{\label{outside1b}\includegraphics[width=1in]{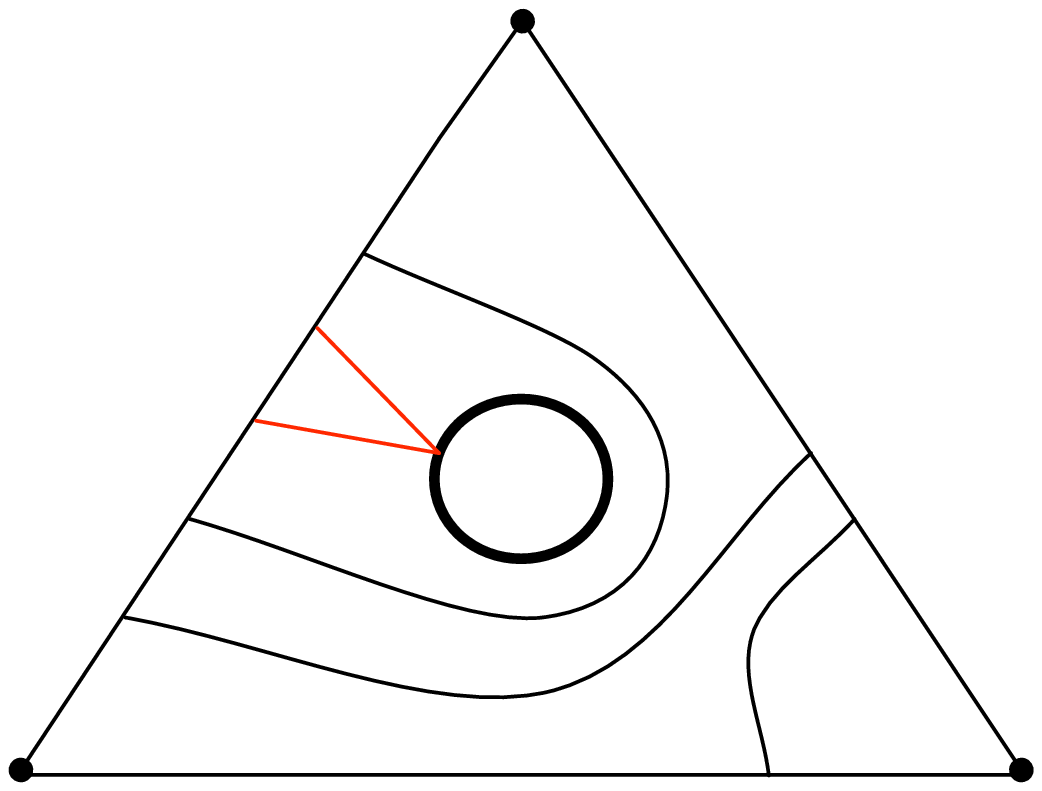}}
\subfigure[]{\label{outside2a}\includegraphics[width=1in]{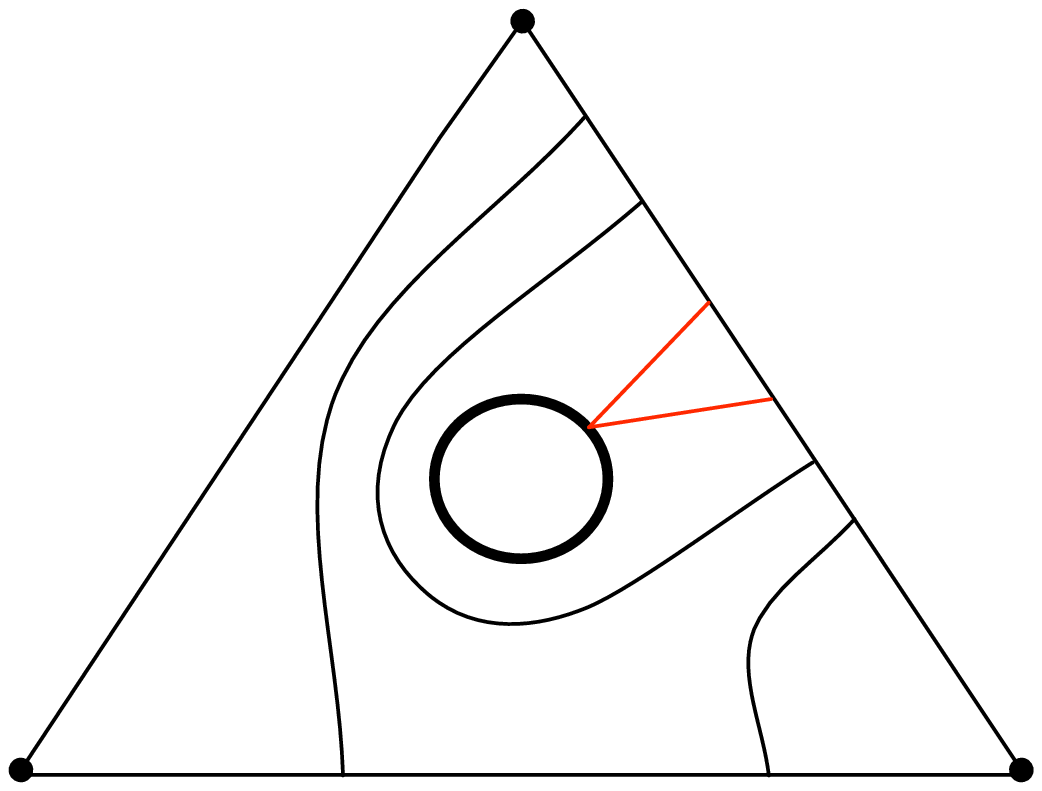}}
\subfigure[]{\label{outside2b}\includegraphics[width=1in]{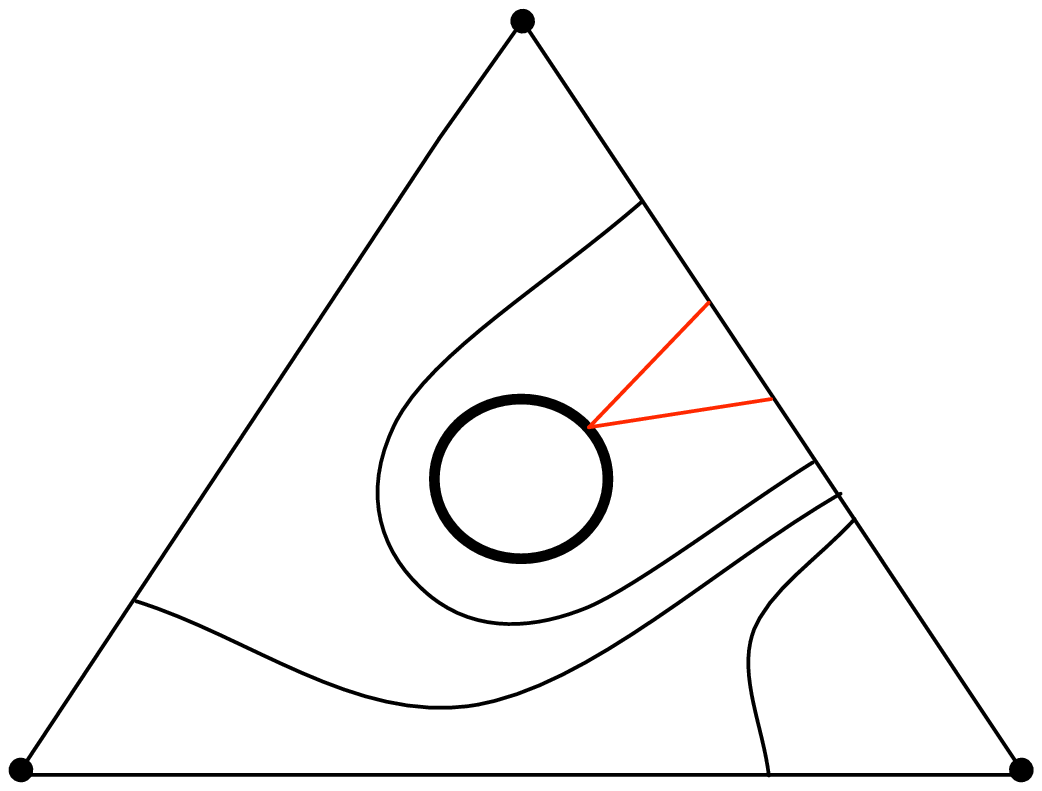}} 
\subfigure[]{\label{outside3a}\includegraphics[width=1in]{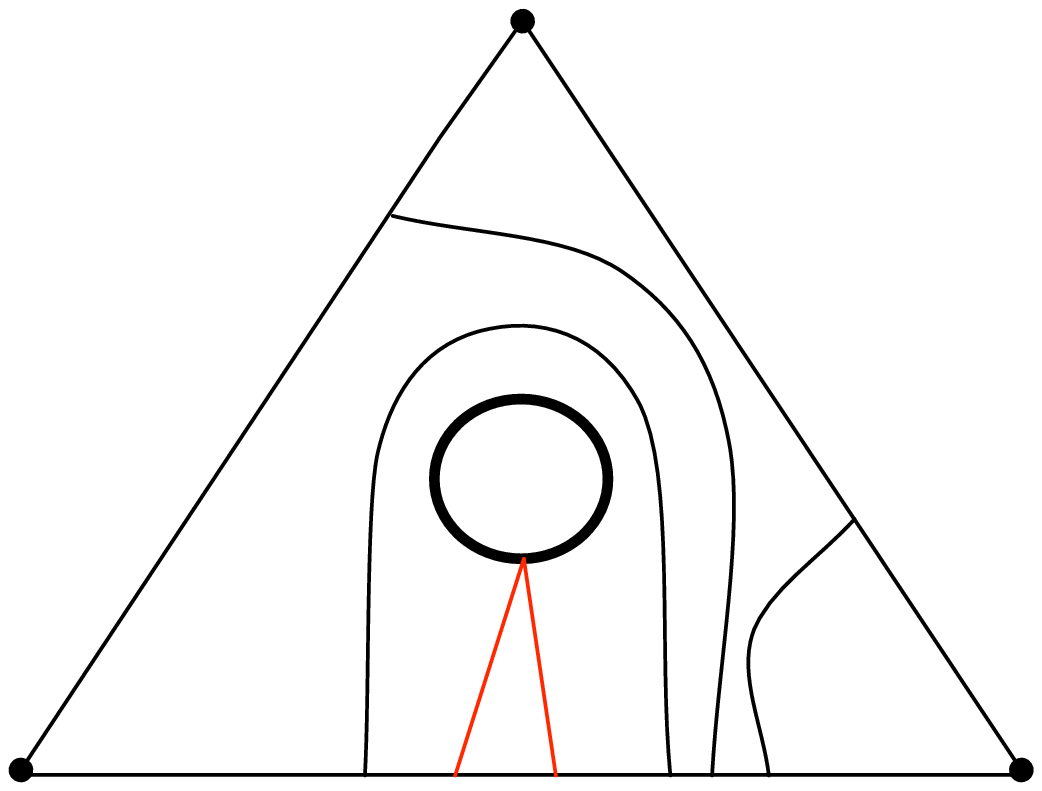}}
\subfigure[]{\label{outside3b}\includegraphics[width=1in]{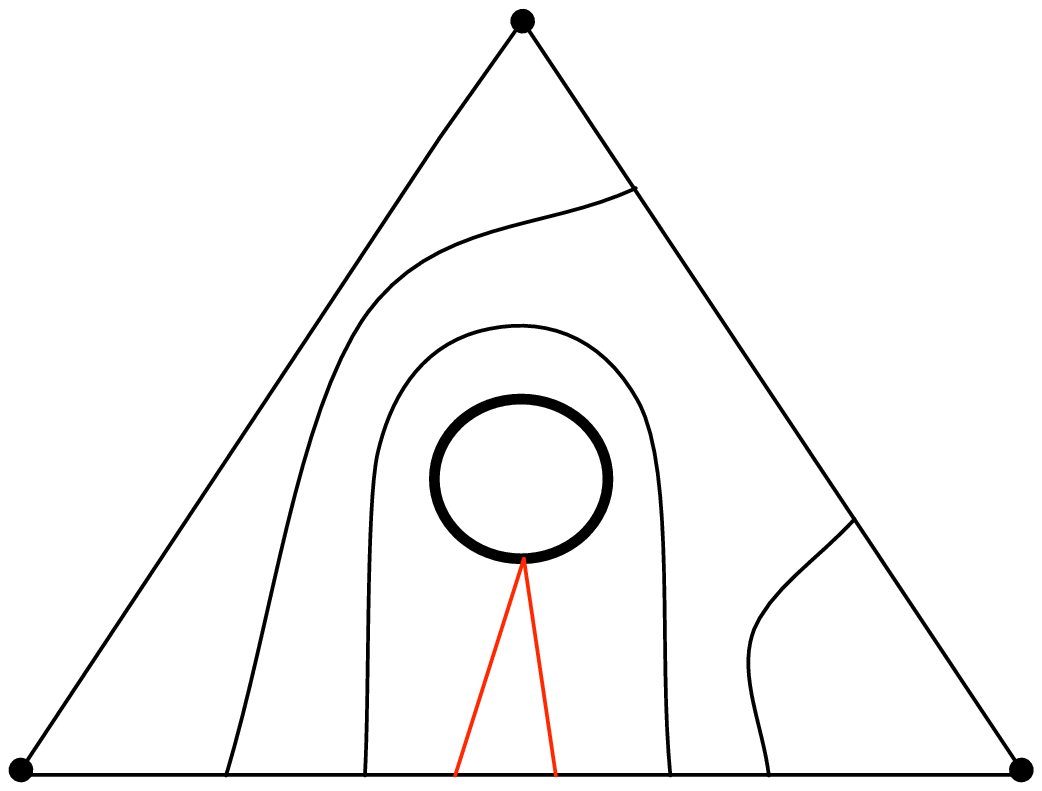}}    
\end{center} \caption{}
\label{outside}
\end{figure} 

Immediately, we can rule out the configurations in Figures~\ref{outside2a} and ~\ref{outside3b} because each would imply that $c_1=0$.  Now consider the configuration in Figure~\ref{outside2b}.  If $n_1,n_2,n_3$ are the number of points on each side shown below in Figure~\ref{outside2blabeled}, then $n_1=a_1$, $n_2=c_1$, and $n_1+n_2+n_3=b_1$.  Given that $a_1+b_1=c_1$, this gives that $n_1$ and $n_3$ are both equal to zero.  This is a contradiction since $a_1$ is not equal to zero.  We can also eliminate configuration ~\ref{outside3a} in a similar fashion (they are mirror images of each other).

\begin{figure}[htbp]
\begin{center}
\includegraphics[width=1.5in]{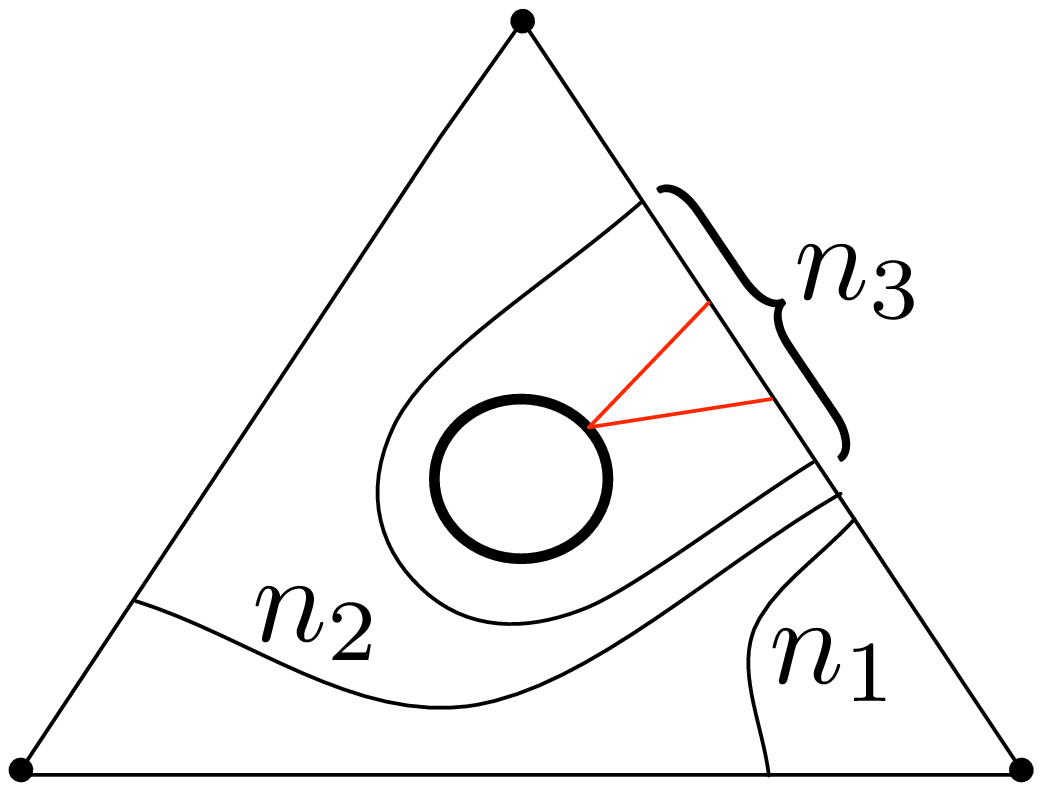}
\caption{}
\label{outside2blabeled}
\end{center}
\end{figure}

Therefore, we are left with only two possibilities: ~\ref{outside1a} and ~\ref{outside1b}.  If $m_1,m_2,m_3$ are as shown below in Figure~\ref{outside1alabeled}, then $m_1=b_1, m_1+m_2=a_1$, and $m_2+m_3=c_1$.  In particular, this configuration corresponds to the case that $a_1>b_1$.  It is easily checked that ~\ref{outside1b} corresponds to the case that $b_1>a_1$, and if $a_1=b_1$, then $m_2=0$.  Hence, the only ambiguity is the amount of twisting that occurs in a neighborhood of $\partial D_n$.  Thus, we can uniquely determine each $\beta(y_i)$ up to multiplication by powers of $\Delta^2$.

\begin{figure}[htbp]
\begin{center}
\includegraphics[width=1.5in]{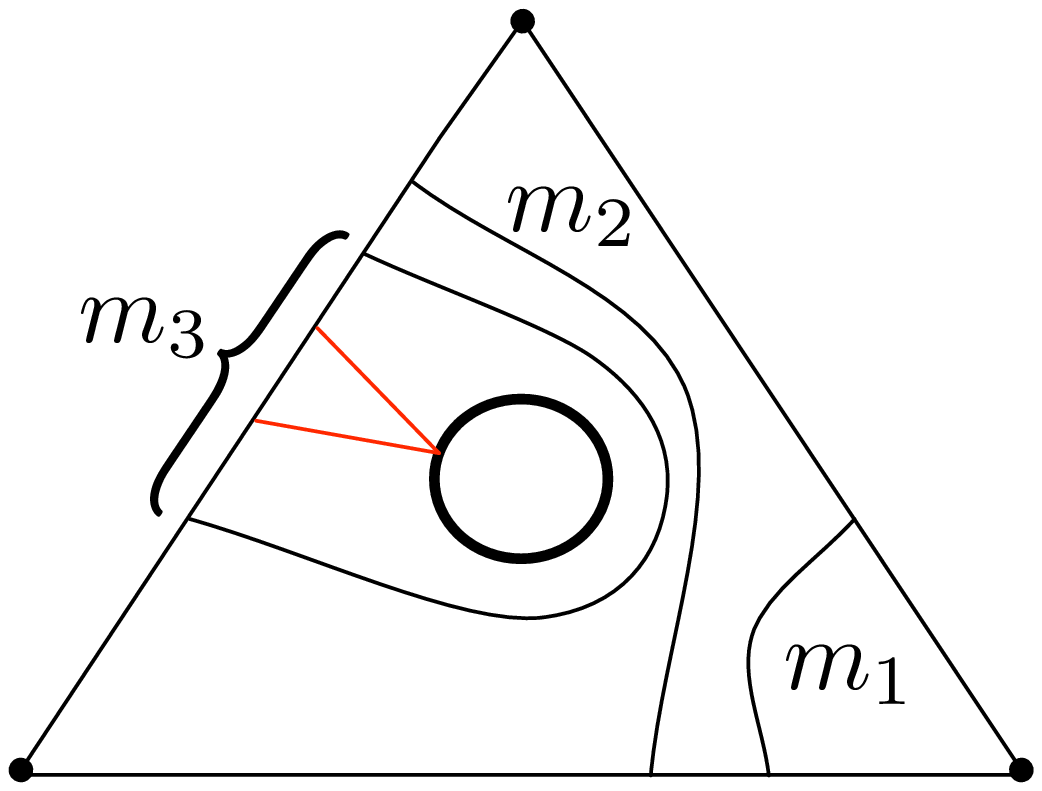}
\caption{}
\label{outside1alabeled}
\end{center}
\end{figure}

\subsection{Veering information depends on k}
Given $\beta \in B_3$, for each $i=1,2,3$ we can construct $y_i'$ in $\pi_1(D_n,p_0)$ so that the following hold:
\begin{enumerate}
\item[i.] $y_i'$ and $\beta(y_i)$ have the same geometric intersection numbers $a_i,b_i,c_i$;
\item[ii.] $y_i'$ is to the right of $y_i$; and
\item[iii.] $y_i'$ has minimal boundary twisting in the sense that $\Delta^{-2}(y_i')$ is to the left of $y_i$.  
\end{enumerate}
Furthermore, Theorem~\ref{same} guarantees that $\beta(y_i)=\Delta^{2k}(y_i')$.   Therefore, the sign of $k$ tells us whether $\beta(y_i)$ is to the right of $y_i$.  We summarize this in the following corollary.

\begin{corollary} Let $y_i' \in \pi_1(D_n,p_0)$ be as described above.  Then $\beta(y_i)=\Delta^{2k}(y_i')$.  Moreover, $\beta(y_i)$ is to the right of $y_i$ if and only if $k\geq 0$.\end{corollary}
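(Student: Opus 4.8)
The plan is to prove the two assertions separately: first the identity $\beta(y_i)=\Delta^{2k}(y_i')$, and then the equivalence between $k\geq 0$ and $\beta(y_i)$ veering to the right of $y_i$. For the identity I would reuse the configuration analysis from the proof of Theorem~\ref{same}. Property (i) forces $\beta(y_i)$ and $y_i'$ to realize the same triple $(a_i,b_i,c_i)$ of geometric intersection numbers with $\alpha_1,\alpha_2,\alpha_3$, and that case analysis (the inside/outside pictures culminating in Figure~\ref{outside1alabeled}) shows that such a triple pins down the isotopy class of an arc based at $p_0$ except for the number of times it winds in a collar of $\partial D_n$, i.e.\ up to the action of a power of the boundary twist $\Delta^2$. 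Hence $\beta(y_i)=\Delta^{2k}(y_i')$ for a unique integer $k$, which is the first assertion.

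For the equivalence I would reduce everything to two facts about the ``to the right'' relation on arcs based at $p_0$. The first is a monotonicity lemma in the spirit of Honda--Kazez--Mati\'c: a positive full Dehn twist $\Delta^2$ about a curve parallel to $\partial D_n$ carries every such arc $\eta$ to an arc $\Delta^2(\eta)$ lying to the right of $\eta$, and $\Delta^{-2}$ carries it to the left. The second is that, for arcs based at $p_0$ sharing their endpoints, ``to the right'' is transitive; in fact it linearly orders the isotopy classes, with $\Delta^2$ acting as a shift in this order. Granting these, the equivalence follows by chaining together the individual twists.

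Explicitly, suppose $k\geq 1$. Iterating the monotonicity lemma produces a chain in which $\Delta^{2k}(y_i')$ is to the right of $\Delta^{2(k-1)}(y_i')$, which is to the right of $\cdots$, which is to the right of $y_i'$; and property (ii) states that $y_i'$ is to the right of $y_i$. Transitivity then places $\Delta^{2k}(y_i')$ to the right of $y_i$. For $k\leq -1$ the mirror chain applies: each $\Delta^{2k}(y_i')$ is to the left of $\Delta^{2(k+1)}(y_i')$, hence to the left of $\Delta^{-2}(y_i')$, which by property (iii) is to the left of $y_i$, so transitivity places $\Delta^{2k}(y_i')$ to the left of $y_i$. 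The case $k=0$ is property (ii) itself. Since $\beta(y_i)=\Delta^{2k}(y_i')$, we conclude that $\beta(y_i)$ is to the right of $y_i$ precisely when $k\geq 0$.

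The main obstacle is the monotonicity lemma, and in particular verifying that a positive boundary twist never drags an arc that already veers right back across $y_i$. I would establish it in the universal cover $\pi:\tilde X\to D_n$ used in the proof of Theorem~\ref{pi_1}: fixing the boundary line $L$ together with a lift $\tilde p_0$ and taking the lift of $\Delta^2$ that fixes $L$ pointwise, one checks that this lift displaces the far ends of the arcs emanating from $\tilde p_0$ consistently in the positive direction, so that the lift of $\Delta^2(\eta)$ enters the right region determined by $\tilde\eta$. Making this displacement argument precise, and confirming that the resulting order has no cyclic wraparound so that transitivity is available across arbitrarily long chains, is the delicate step; a direct comparison of tangent vectors at $p_0$ does not suffice, which is why the covering-space formulation is needed.
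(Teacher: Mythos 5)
Your proposal is correct and follows the route the paper intends: the first assertion is the arc-by-arc content of the configuration analysis proving Theorem~\ref{same}, and the equivalence with $k\geq 0$ is exactly what properties (ii) and (iii) of $y_i'$ are designed to deliver once one knows that $\Delta^{\pm 2}$ moves every essential arc strictly to the right (resp.\ left) and that ``to the right'' is transitive in the universal cover. The paper's own proof simply declares this ``immediate from the construction of $y_i'$ and Theorem~\ref{same}''; you have supplied the chaining argument and correctly isolated the monotonicity and transitivity facts that make it work.
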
 
\begin{proof} This is immediate from the construction of $y_i'$ and Theorem~\ref{same}.\end{proof}

Now, how do we find the integer $k$?  Admittedly, this is the most cumbersome part of the process.  After $y_i'$ is found, we use the method described in Section~\ref{geometric} to calculate each corresponding column vector $C_i$ of a matrix which we call $\mathrm{M(\beta')}$.  Here is where we use the faithfulness of the representation for $n=3$.  If we are lucky, this matrix will be $\mathrm{M(\beta)}$, and hence $k=0$.  If we are not, we use our computing software of choice to multiply $\mathrm{M(\beta')}$ by powers of $\mathrm{M(\Delta^2)}= \begin{pmatrix}t^2&t^2-t^3&t^2-t^3\\ t-t^2&t-t^2+t^3&t-t^2\\ 1-t&1-t&1-t+t^3\end{pmatrix}$ until we reach $\mathrm{M(\beta)}$.  The power of $\mathrm{M(\Delta^2)}$ that gives $\mathrm{M(\beta)}$ is equal to $k$.

{}

\end{document}